\documentclass[12pt]{amsart}
\setlength{\textwidth}{14 cm}
\setlength{\textheight}{21cm}
\setlength{\oddsidemargin}{35 pt}
\setlength{\evensidemargin}{35 pt}

\usepackage{graphicx}
\usepackage{amssymb}

\usepackage{amsrefs}

 \usepackage{url}
\usepackage{booktabs}
\usepackage{hyperref}
\usepackage{verbatim}
\usepackage{xcolor}
\usepackage{array}

\usepackage[shortlabels]{enumitem}

\newcommand{\RN}[1]
    {\MakeUppercase{\romannumeral #1}}

\numberwithin{equation}{section}

\newtheorem{theorem}{Theorem}[section]
\newtheorem{lemma}[theorem]{Lemma}
\newtheorem{proposition}[theorem]{Proposition}
\newtheorem{corollary}[theorem]{Corollary}

\newtheorem{remark}[theorem]{Remark}
\newtheorem{definition}[theorem]{Definition}
\newtheorem{notation}[theorem]{Notation}
\newtheorem{example}[theorem]{Example}

\DeclareMathOperator{\Bl}{Bl}

\newcommand{\PP}[0]{\mathbb{P}}

\newcommand{\R}[0]{\mathbb{R}}

\newcommand{\Q}[0]{\mathbb{Q}}

\DeclareMathOperator{\Bs}{Bs}

\DeclareMathOperator{\Eff}{Eff}
\DeclareMathOperator{\Nef}{Nef}
\DeclareMathOperator{\Mov}{Mov}
\def\Pic{\operatorname{Pic}}
\DeclareMathOperator{\N}{N}
\DeclareMathOperator{\A}{A}
\DeclareMathOperator{\NE}{NE}

\newcommand{\OO}{\mathcal{O}}
    
\title{Duality  and polyhedrality of cones for Mori dream spaces}

\author[Brambilla]{Maria Chiara Brambilla}\address{
Universit\`a Politecnica delle Marche, via Brecce Bianche, I-60131 Ancona, Italia}
\email{m.c.brambilla@univpm.it}

\author[Dumitrescu]{Olivia Dumitrescu}\address{
1. University of North Carolina at Chapel Hill, 
340 Phillips Hall CB 3250 NC 27599-3250 and 2. Max-Planck Institute for Mathematics, Bonn, Vivatsgasse 7
53111 Bonn, Germany}
\email{dolivia@unc.edu}

\author[Postinghel]{Elisa Postinghel}\address{
Dipartimento di Matematica, Universit\`a degli Studi di Trento, via Sommarive 14
I-38123 Povo di Trento (TN)}
\email{elisa.postinghel@unitn.it}

\author[Santana Sanchez]{Luis Jos\'e Santana S\'anchez}\address{Departamento de Matem\'aticas, Estad\'istica e I. O., and IMAULL, Universidad de La Laguna 38200, La Laguna, Tenerife, Espa\~na}\email{lsantans@ull.edu.es}

\keywords{Mori dream space, Mori chamber decomposition, stable base locus decomposition, weak duality, strong duality}
\subjclass[2020]{Primary: 14C20 Secondary:  14E05, 14E30, 14J45}

\begin{document}

\begin{abstract} 
Our goal is twofold. On one hand, we show that the cones of divisors ample in codimension $k$ on a Mori dream space are rational polyhedral. 
On the other hand, we study the duality between such cones and the cones of $k$-moving curves by means of the Mori chamber decomposition of the former. We give a new proof of the weak duality property (already proved by Payne and Choi) and we exhibit an interesting family of examples for which  strong duality holds.
\end{abstract}

\maketitle

\section{Introduction}

Let $X$ be a normal $\mathbb Q$-factorial variety of dimension $n$ and with  irregularity zero, defined over an algebraically closed field of characteristic zero.

For any integer $0 \le k \le n-1$, we consider the cone {$D_k$} of numerical classes of divisors on $X$ whose stable base locus has codimension larger than $k$ and we set $\mathcal{D}_k$ to be the closure of $D_k$ in the N\'eron-Severi space $\N^1(X)_\mathbb{R}$. Following \cite{Payne}, we call $\mathcal D_k$ the  cone of divisors that are \emph{ample in codimension $k$}. 
{Clearly $\mathcal{D}_0$ is the pseudo-effective cone $\overline{\Eff(X)}$, $\mathcal{D}_1$ is the cone of movable divisors, and
 $\mathcal D_{n-1}$ is the nef cone by Kleiman's theorem.
When $X$ is a Mori dream space, the cones $\mathcal{D}_0$, $\mathcal{D}_1$ and $\mathcal{D}_{n-1}$  are rational polyhedral, as shown in \cite{Hu-Keel}, moreover $\mathcal{D}_0$ and $\mathcal{D}_1$ admit a finite \emph{Mori chamber decomposition} of which $\mathcal{D}_{n-1}$ is a chamber.
In this article we prove that each  cone $\mathcal{D}_k$ inherits the chamber decomposition and in particular   it is rational polyhedral (and $D_k$ is closed).

We have the obvious filtration of cones
$$\Nef(X)=\mathcal{D}_{n-1}\subseteq\cdots\subseteq \mathcal{D}_1\subseteq\mathcal{D}_0=\overline{\Eff(X)};$$
on the other hand we 
shall consider an analogous filtration of cones of pseudo-effective $1$-cycles:
$$\overline{\NE(X)}=\mathcal{C}_{n-1}\supseteq\cdots\supseteq \mathcal{C}_1\supseteq\mathcal{C}_0,$$
where $\mathcal{C}_k$ is the closure of the cone in $\N_1(X)_\mathbb{R}$ 
generated by classes of $k$-\emph{moving} curves, i.e. classes of irreducible curves moving in a family that sweeps out an $(n-k)$-dimensional subvariety of X.
Corresponding ends of the above filtrations are dual to each other with respect to the standard intersection pairing. Indeed the dual of the Mori cone $\mathcal{C}_{n-1}^\vee$  is  the nef cone and  {the cone $\mathcal{C}_0$ is dual to
the pseudo-effective cone $\overline{\Eff(X)}$, as shown in \cite{BDPP}.} 

{Since}
 duality of the cones $\mathcal{D}_k$ and $\mathcal{C}_k$, for $1\le k\le n-2$, is not to be expected in general,
{it is a  natural question to ask} under which conditions and for which  integers $1\le k\le n-2$  
{{\it strong duality} holds, that is} $\mathcal{D}_k^\vee=\mathcal{C}_k$.

In this paper, we show that the answer is positive, for every $k$,  for all blow-ups of $\mathbb{P}^n$ at $s$ points in general position that are Mori dream spaces.

The answer to the duality question  is not positive in general but a weaker version holds, as proved in \cite{Payne} for toric varieties and in \cite{ChoiCONES} for Mori dream spaces: for a Mori dream space $X$,  $\mathcal{D}_k^\vee$ is generated by curves that move in families sweeping out the birational image, on a small modification of $X$, of $(n-k)$-dimensional subvarieties of X. In other terms, $k$-moving curves on $X$ are not enough to reconstruct, by duality, the cone $\mathcal{D}_k$; instead one needs to consider $k$-moving curves on all small modifications of $X$, which are of finite number.
We will call this property \emph{weak duality}.

The first main theorem of this paper, Theorem \ref{MCD-cones-divisors}, describes the combinatorial properties of the cones $\mathcal D_k$ for Mori dream spaces,  providing a different perspective on the study of these cones and setting the bases for a new proof of the weak duality statement.
As a crucial tool we use the stable base locus decomposition of the effective and movable cones,  which is refined by the Mori chamber decomposition, and we prove that it is compatible with the filtration of cones $\mathcal{D}_k$.
 \begin{theorem}
 \label{intro-thm1}
If $X$ is a Mori dream space, then the cones $\mathcal{D}_k$  are unions of Mori chambers, hence  rational polyhedral, and the cones $D_k$ are closed, i.e. $D_k=\mathcal{D}_k$, for every $0\leq k\leq n-1$.
\end{theorem}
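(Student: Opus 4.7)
The plan is to show that $\mathcal{D}_k$ coincides with a union of closed Mori chambers of $\overline{\Eff(X)}$, and then to deduce rational polyhedrality and the identity $D_k=\mathcal{D}_k$ by a convexity argument.

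The starting point is the Mori chamber decomposition available on an MDS: $\overline{\Eff(X)} = \bigcup_\alpha \overline{\Gamma_\alpha}$ is a finite union of closed rational polyhedral cones, and it refines the stable base locus decomposition. Concretely, each chamber $\Gamma_\alpha$ is associated to a small $\mathbb{Q}$-factorial modification $\pi_\alpha \colon X \dashrightarrow X_\alpha$, a contraction $\phi_\alpha \colon X_\alpha \to Y_\alpha$, and a collection of ``fixed'' prime divisors $E_{\alpha,1},\dots, E_{\alpha, m_\alpha}$ on $X$, so that every $D \in \Gamma_\alpha$ admits a Zariski-type decomposition $D = \pi_\alpha^* A + \sum a_i E_{\alpha, i}$ with $A$ ample on $Y_\alpha$ and $a_i > 0$. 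In particular $\Bs(D)$ is the union of the $E_{\alpha, i}$ with $\Bs(\pi_\alpha^* A)$, the latter in codimension at least $2$.

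The central step is to check that the codimension of the stable base locus does not drop as one passes from the interior of a Mori chamber to its closure. The faces of $\overline{\Gamma_\alpha}$ correspond to degenerations of the above decomposition in which either some coefficient $a_i$ vanishes (removing $E_{\alpha, i}$ from the SBL) or $A$ moves to a face of $\Nef(Y_\alpha)$ (in which case $\Bs(\pi_\alpha^* A)$ may enlarge but remains in codimension $\ge 2$). Granting this, writing $I_k$ for the set of chambers whose generic SBL has codimension $>k$, one concludes $D_k = \bigcup_{\alpha \in I_k} \overline{\Gamma_\alpha}$, a finite union of closed rational polyhedral cones. Hence $D_k$ is closed, so $D_k = \mathcal{D}_k$.

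To upgrade this union to a single rational polyhedral cone I invoke convexity: if $D, D' \in D_k$, then $\Bs(D + D') \subseteq \Bs(D) \cup \Bs(D')$ has codimension $>k$, so $D + D' \in D_k$. A convex cone which is a finite union of rational polyhedral cones equals the convex hull of the (finitely many) extremal rays of its components, and is therefore rational polyhedral. The main obstacle I expect is the monotonicity of $\codim \Bs$ under closure in the second step: one has to exclude the appearance of a new divisorial component of the SBL on a wall between two chambers that was not present in either open chamber. This requires a careful analysis of how the fixed part $\sum a_i E_{\alpha, i}$ and the nef class $A$ interact as one reaches the wall, together with the way walls correspond to MMP-type contractions on the small modifications $X_\alpha$.
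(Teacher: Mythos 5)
Your overall architecture---decompose $\overline{\Eff(X)}$ into finitely many Mori chambers, sort the chambers according to the codimension of the generic stable base locus, and deduce polyhedrality of $\mathcal D_k$ from finiteness together with convexity of $D_k$ (via $\mathbb{B}(D+D')\subseteq\mathbb{B}(D)\cup\mathbb{B}(D')$)---is essentially the paper's, which passes through the stable base locus decomposition (Lemma \ref{filtration+SBLD}) and then refines it by nef chambers. The genuine gap is precisely the step you flag and then ``grant'': that $\codim\mathbb{B}$ does not drop as one passes from the interior of a chamber to its boundary. The justification you sketch for it is not sufficient. When $A$ degenerates to a face of $\Nef(Y_\alpha)$ you only assert that the base locus of the semi-ample part ``remains in codimension $\ge 2$''; that settles nothing beyond $k=1$, where the statement is anyway part of the definition of a Mori dream space. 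For $k\ge 2$ you need the stable base locus on the wall to stay in codimension $>k$, and ``codimension $\ge 2$'' does not give that. Without this, the equality $D_k=\bigcup_{\alpha\in I_k}\overline{\Gamma_\alpha}$, and hence closedness of $D_k$, is unproven.

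The missing input is the semi-ampleness built into the definition of a Mori dream space. For $k\ge 1$ the relevant chambers are the nef chambers $f_i^*(\Nef(X_i))$ of the small modifications, and every nef class $N$ on $X_i$ is semi-ample, so $mN$ is basepoint free for some $m$ and therefore $\mathbb{B}(f_i^*N)$ is contained in the indeterminacy locus of $f_i$ for \emph{every} $N\in\Nef(X_i)$, boundary included. Hence on the whole closed chamber the stable base locus sits inside the indeterminacy locus of $f_i$, whose codimension is exactly the generic codimension for that chamber: this is your monotonicity. (The paper packages this as: every extremal ray of $\mathcal D_k$ is the pull-back of a semi-ample class under some $f_j\in F_{n-k}$, hence lies in $D_k$; convexity of $D_k$ then forces $\mathcal D_k\subseteq D_k$.) A secondary point: the reverse inclusion $D_k\subseteq\bigcup_{\alpha\in I_k}\overline{\Gamma_\alpha}$ also needs a word for divisors lying on walls whose adjacent chambers you have not yet identified---perturb towards a generic ample class inside the convex cone $D_k$ to land in the interior of a full-dimensional chamber, which must then belong to $I_k$.
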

The fact that the cones $\mathcal{D}_k$ are rational polyhedral in the case of toric varieties was proved by Payne in \cite{Payne} where, furthermore,  a constructive description was presented  in terms of the boundary divisors.

Choi asked if the $\mathcal D_k$'s are rational polyhedral also for Mori dream spaces, see \cite [Question 4.4]{ChoiCONES} and \cite[p. 1177]{Choi14}. 
The first statement of  Theorem \ref{intro-thm1} answers affirmatively the question for divisors on Mori dream spaces.
Moreover if we restrict to the toric case, it gives  another proof via a new approach to the rational polyhedrality of the cones $\mathcal{D}_k$.

Our second main result, proved in Theorems \ref{strong-duality-thm-n+3} and \ref{strong-duality-thm-X^4_8}, provides a first family of examples for which strong duality holds.

 \begin{theorem}
 \label{intro-thm2}
	The strong duality holds for all Mori dream blow-ups of projective spaces at points in general position. 
\end{theorem}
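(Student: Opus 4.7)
My plan is to prove strong duality separately for each family of Mori dream blow-ups of projective spaces, following Mukai's classification, which splits the problem into the universal case $s \leq n+3$ (Theorem \ref{strong-duality-thm-n+3}) and the sporadic case $X^4_8$ (Theorem \ref{strong-duality-thm-X^4_8}). The endpoints $k=0$ and $k=n-1$ are classical, so the content lies in the intermediate range $1 \leq k \leq n-2$.

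By Theorem \ref{intro-thm1}, the cone $\mathcal{D}_k$ is the union of the nef chambers $\Nef(X_i)$ of the finitely many small $\mathbb{Q}$-factorial modifications $X_i$ whose Mori chamber meets $\mathcal{D}_k$. Dualizing and identifying $\N_1(X)_\mathbb{R} \cong \N_1(X_i)_\mathbb{R}$ yields $\mathcal{D}_k^\vee = \bigcap_i \overline{\NE(X_i)}$. The inclusion $\mathcal{C}_k \subseteq \mathcal{D}_k^\vee$ is automatic: if $C$ is a $k$-moving curve and $D$ is ample in codimension $k$, then the sweeping subvariety of $C$ cannot be contained in the stable base locus of $D$, forcing $D \cdot C \geq 0$. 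Weak duality \cite{ChoiCONES} supplies the reverse inclusion at the cost of allowing $k$-moving curves on any $X_i$. Thus strong duality reduces to the following statement: every extremal ray of $\overline{\NE(X_i)} \cap \mathcal{D}_k^\vee$ is generated by a curve class that admits an irreducible $k$-moving representative on $X$ itself.

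To establish this for $s \leq n+3$, I would use the explicit description of the Mori chamber decomposition of $X$ to enumerate the relevant small modifications $X_i$ and the extremal curves of their Mori cones. These extremal curves are strict transforms of lines and rational normal curves through subsets of the base points; their deformations on $\mathbb{P}^n$ sweep out explicit linear cycles whose dimensions can be computed by a direct parameter count as the residual dimension after imposing the base point incidences. In each case the sweeping subvariety on $X$ turns out to have dimension at least $n-k$, placing the curve in $\mathcal{C}_k$. The case $X^4_8$ is handled analogously, the enumeration now being carried out explicitly via the known description of $\Cox(X^4_8)$.

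The main obstacle is the geometric step: showing that, after applying a small modification $X \dashrightarrow X_i$, the birational transform on $X$ of an extremal $k$-moving curve of $X_i$ still moves in a family of the correct dimension. A priori this may fail, as Payne's toric counterexamples to strong duality illustrate; ruling it out in our setting relies on the explicit parameter counts above, together with the fact that the $X_i$ are themselves blow-ups of projective varieties at finite point configurations related to the original by Cremona-like transformations, so that families of curves on $X_i$ pull back coherently to $X$ without loss of sweeping dimension.
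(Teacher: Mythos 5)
Your overall reduction is the right one, and it matches the paper's strategy in outline: the inclusion $\mathcal{C}_k\subseteq\mathcal{D}_k^\vee$ is automatic, weak duality gives the reverse inclusion only up to passing to small modifications, and the whole content is to show that each extremal ray of $\mathcal{D}_k^\vee$ is generated by a class with an irreducible $k$-moving representative on $X$ itself. But the step you defer to ``a direct parameter count'' is precisely the theorem, and your description of the relevant curves is not accurate. For $s=n+3$ the extremal rays of $\mathcal{D}_k^\vee$ are the classes $c_{I,t}=(|I|+(n+1)t-1)h-\sum_{i\in I}(t+1)e_i-\sum_{i\notin I}te_i$ dual to the walls $\kappa_{J(L_I,\sigma_t)}=0$ with $|I|=n-2t-k+1$; for $t\ge 1$ these are not strict transforms of lines or rational normal curves but higher-degree curves with assigned multiplicities $t+1$ and $t$ at the points, which must be shown to sweep out the join $J(L_I,\sigma_t)$ of a linear span with a secant variety of the rational normal curve. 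A parameter count does not establish irreducibility of the representatives nor that the swept subvariety has dimension exactly $n-k$; this is exactly where Payne-type counterexamples live, as you yourself note. The paper closes this gap in two steps: first it identifies the walls explicitly (Theorem \ref{n+3: SBLD=MCD}, using Mukai's Mori chamber decomposition and the base-locus multiplicities $\kappa_{J(L_I,\sigma_t)}$), yielding the inequalities for $\mathcal{D}_k$ in Proposition \ref{n+3: D_k} and hence the explicit generators of $\mathcal{D}_k^\vee$; second, by induction on $t$ using the Cremona action \eqref{Cremona curves}, it shows each $c_{I,t}$ lies in the Weyl orbit of the linear-cycle class $(n-k)h-\sum_{i\in I'}e_i$ of Example \ref{lin curve}, and transports the irreducible representatives of the latter back to irreducible representatives of $c_{I,t}$ on $X$. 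Your closing assertion that families on $X_i$ ``pull back coherently to $X$ without loss of sweeping dimension'' is the unproved claim at the heart of the matter; the paper avoids transporting families across small modifications altogether by producing the representatives directly on $X$ via the Weyl action.

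Two smaller points. Your case split omits $X^3_7$ (for $n=3$ the classification allows $s\le 7=n+4$), which the paper treats together with $X^4_8$ in Theorem \ref{strong-duality-thm-X^4_8} by means of the classification of Weyl cycles and the same Weyl-orbit reduction; the surface cases $X^2_{\le 8}$ are vacuous since only $k=0$ and $k=n-1$ occur there. Also, your identity $\mathcal{D}_k^\vee=\bigcap_i\overline{\NE(X_i)}$ over $f_i\in F_{n-k}$ is a correct and potentially useful reformulation of Theorem \ref{MCD-cones-divisors}, but as stated it only repackages weak duality and does not by itself produce $k$-moving curves on $X$.
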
 

We remark that the geometry of these spaces and their chamber decompositions are completely encoded by 
 \emph{Weyl cycles}. We will investigate more deeply
the interplay between the birational geometry and the action of the Weyl group
in forthcoming work, continuing the study started in \cite{BraDumPos3,BDP-Ciro}.

Moreover, in future work we will seek   criteria for  strong duality to hold for general Mori dream spaces.

Finally, we point out that ampleness in codimension $k$ relates to different notions of cohomological ampleness for divisors. These notions were studied by Andreotti-Grauert and Sommese (naive ampleness), by Demailly-Peternell-Schneider (uniform ampleness) and by Totaro ($T-$ampleness),  
see \cite{Totaro} and references therein. 
In \cite[Question 11.2]{Totaro}, Totaro asked if, for Fano varieties in characteristic zero,
the  closure  of the cone of cohomologically $k-$ample  divisors is a union of rational polyhedral cones.
In \cite{BOP-S} the authors proved that this property holds for simplicial toric varieties.

The article is organised as follows. 
In Section \ref{section-preliminaries} we recall some preliminaries on cones of curves and divisors and on chamber decompositions.
Section \ref{sect-geometry-cones} is dedicated to our first main result, Theorem \ref{intro-thm1}, that establishes the polyhedrality of the cones $\mathcal D_k$ for all Mori dream spaces. Section \ref{sectionb-weak-duality} contains our proof of the weak duality property for $\mathcal D_k$ and the cones of curves $\mathcal C_k$ for Mori dream spaces,  by means of the Mori chamber decomposition of the effective cone. We also include the discussion of a toric example for which strong duality does not hold. 
In Section \ref{section-strongduality-P^n} we prove our second main result, Theorem \ref{intro-thm2}, which concerns a family of examples of Mori dream spaces satisfying strong duality.

\subsection*{Acknowledgements} 
We would like to thank Artie Prendergast-Smith for various useful comments.
We thank the referee for several useful comments.

The first and third authors are members of INdAM-GNSAGA. 
The second author would like to express her gratitude to the Max-Planck Institute for Mathematics, Bonn, for their generosity, hospitality and stimulating environment. During the time of this project, she was also supported by the NSF-FRG grant DMS 2152130 and the Simons Foundation collaboration grant 855897. 
The third author's research is funded by the European Union under the Next Generation EU PRIN 2022 \emph{Birational geometry of moduli spaces and special varieties}, Prot. n. 20223B5S8L. 
The fourth author was partially supported by the ULL funded research project MACACO.

\section{Preliminaries}\label{section-preliminaries}
Throughout the paper, we assume that  $X$ is a normal $\mathbb Q$-factorial variety of dimension $n$, defined over an algebraically closed field of characteristic zero.

In this section we recall some preliminaries. As a general reference see \cite{Laz1}.
\subsection{Rational polyhedral cones}
Let $V\subset\R^n$ be a convex cone. A subcone $W\subset V$ is {\it extremal} if $u+v\in W$ implies $u,v\in W$. A one-dimensional subcone is called a {\it ray}. 
 A cone is {\it polyhedral} if it is the intersection of a finite number of half-spaces through the origin. 
By the Minkowski-Weyl theorem, a cone is  polyhedral if and only if it is finite, i.e.\ it is the convex conical hull of a finite number of vectors. In particular a polyhedral cone is closed.
A convex cone is {\it rational polyhedral} if it  is generated by a finite set of integer vectors.

\subsection{Curve classes and divisor classes}
Let $\textrm{Pic}(X)$ be the Picard group of $X$ and let $\A_k(X)$ be the group of $k$-cycles on $X$, for $1\le k\le n-1$.
We denote with 
$\N^1(X)_\R=(\Pic(X)/\equiv)\otimes \R$ the N\'eron-Severi space of $X$ and with  $\N_1(X)_\R=(\A_1(X)/\equiv)\otimes \R$ its dual space with respect to the standard intersection pairing.
The cones of divisors $\Nef(X)\subseteq \overline{\Eff(X)}\subset\N^1(X)_\R$ are, respectively, the nef and the pseudo-effective cones. The former is dual to the Mori cone $\overline{\NE(X)}\subset \N_1(X)_\R$, which is the closure of the cone of effective $1$-cycles; the latter is dual to the closure of the cone of moving curves, that are numerical curve classes such that the irreducible representatives of the class pass through a general point of $X$, see \cite[Theorem 0.2]{BDPP} or \cite[11.4.C]{Laz2}.

\subsection{Stable Base Locus Decomposition}
Given an effective divisor $D$ on $X$, we denote by $\Bs(D)$ the base locus of the linear system $|D|$. We recall, from \cite[Definition 2.1.20]{Laz1},
that
  the stable base locus of $D$ is the Zariski closed set
$$\mathbb{B}(D) = \bigcap_{m>0} \Bs(mD).$$
Under the assumption $q(X)=h^1(X,\OO_X)=0$, linear and numerical equivalence of $\Q$-divisors coincide.
Thus the stable base locus is well defined for  numerical equivalence classes of $\Q$-divisors on $X$, \cite{ELMNP}.

The {\it Stable base locus decomposition} is a wall-and-chamber decomposition of the pseudo-effective cone $\overline{\Eff(X)}$ of $X$, where the stable base locus is constant in the interior of the chambers, see e.g. \cite[Sect 4.1.3]{Huizenga} and \cite{LMR}  for more details.

We will call {\it stable base locus subvariety of $X$} any irreducible and reduced subvariety $Y\subset X$ that whenever it is a component of the base locus of an effective divisor on $X$, it is a component of its stable base locus.

A divisor $D$ in $X$ is called {\it movable} if its stable base locus has codimension at least two in $X$. 
The movable cone of $X$ is the convex cone $\Mov(X)$ generated by classes of movable divisors.  We have the following obvious inclusions of cones: $\Nef(X)\subseteq\overline{\Mov(X)}\subseteq \overline{\Eff(X)}$.

\subsection{Mori dream spaces and Mori Chamber Decomposition}\label{section-prelim-MCD}
We recall the  definition of \emph{Mori dream space} from %\cite[Definition 1.2.11]{Hu-Keel}.
\cite[Definition 1.10]{Hu-Keel}.
Mori dream spaces include several classes of interesting varieties such as toric varieties, $\mathbb{Q}$-factorial spherical varieties \cite{Brion}, Fano varieties and varieties of Fano type \cite{BCHM} among others.
 
\begin{definition}
 A normal projective $\mathbb{Q}$-factorial variety $X$ is called a Mori dream space if the following conditions hold:
 \begin{enumerate}
\item $\Pic(X)$ is finitely generated, that is $h^1(X,\OO_X)=0$,
\item $\Nef(X)$ is generated by the classes of finitely many semi-ample divisors,
\item there is a finite collection of small $\mathbb{Q}$-factorial modifications $f_i :S\dashrightarrow X_i$, such that each $X_i$ satisfies (2), and $$\Mov(X)=\bigcup f_i^*(\Nef(X_i)).$$ 
\end{enumerate}
\end{definition}

Recall that a small $\mathbb{Q}$-factorial modification (SQM) $f:X\dasharrow X'$ is a contracting birational map that is an isomorphism in codimension $1$, with $X'$ a normal projective and $\mathbb{Q}$-factorial variety. If $X$ is a Mori dream space, then every SQM of $X$ factors as a finite sequence of flips and a regular contraction.

If $X$ is a Mori dream space, then the effective cone $\Eff(X)$ is rational polyhedral, and in particular closed, see e.g. \cite{okawa}.
The section ring $R(X,D)$ of a divisor $D$ on $X$ is finitely generated and it induces a natural rational map $$f_D:X\dashrightarrow X_D\subseteq\textrm{Proj}(R(X,D)),$$ which is regular away from $\mathbb{B}(D)$. Two $\mathbb{Q}$-divisors $D_1$ and $D_2$ are \emph{Mori equivalent} if $f_{D_1}$ and $f_{D_2}$ have the same Stein factorisation. Given an effective divisor $D$ on $X$, the \emph{Mori chamber} of $D$ is the closure of the set of all divisors that are Mori equivalent to $D$. Mori chambers are convex cones and the set of such cones form a fan, the \emph{Mori fan} of $X$.  The maximal cones of this fan form a wall-and-chamber decomposition of the effective cone, called the \emph{Mori chamber decomposition}. Such decomposition 
 is a refinement of the stable base locus decomposition, see e.g.  \cite{Hu-Keel,okawa, LMR} for details.}

\subsection{Cones of $k$-moving curves}
We recall the notion of $k$-moving curve, for $0\le k\le n-1$, cf. \cite{Payne}.

\begin{definition}
An irreducible curve $C\subset X$ is {\em $k$-moving} if it belongs to an algebraic family of curves, whose  irreducible elements cover a Zariski open subset of an effective cycle of dimension (at least) $n-k$. 

We define 
$C_k$ to be the cone generated by the classes of $k$-moving curves in  $\N_1(X)_\R$ and we set $\mathcal{C}_k=\overline C_k$ to be the closure in $\N_1(X)_\R$.
\end{definition}

\begin{example} \label{lin curve} 
Let $p_1,\dots,p_s\in \PP^n$ be points in general position and let $X^n_s$ be to the blow of $\PP^n$ at these points. For every three distinct indices $i,j,k$, the strict transform on $X^n_s$ of an irreducible conic through $p_i,p_j,p_k$ sweeps out the strict transform of the linear span of the points and hence it is an $(n-2)-$moving curve.
Similarly, the strict transform of 
an irreducible curve $C$ of degree $r$ through $r+1$ points
is an $(n-r)-$moving curve since it sweeps out the strict transform of the $r-$dimensional linear span of the points.
\end{example}

It follows from the definitions that a $k$-moving curve class is also $(k+1)$-moving, for $0\le k\le n-2$.
We  obtain the following filtration of cones of curves:
\begin{equation}\label{filtration-cones-curves}
\overline{\NE(X)} = \mathcal{C}_{n-1} \hookleftarrow \mathcal{C}_{n-2}\hookleftarrow \cdots \hookleftarrow \mathcal{C}_1\hookleftarrow \mathcal{C}_0.
\end{equation}
Clearly
$\mathcal C_{n-1}=\overline{\NE(X)}$ is the Mori cone of curves (dual to $\Nef(X)$). Furthermore,
$\mathcal C_0$ is the closed cone of the moving curve classes (dual to $\overline{\Eff(X)}$).

\subsection{Cones $\mathcal D_k$ of divisors that are ample in codimension $k$}
There is a stratification of the pseudoeffective cone of divisors as follows.

\begin{definition}\label{Dk cone definition}
Let $0\le k \le n-1$.
We define $D_k$ to be the cone generated by the classes of {effective} divisors  with no component of the stable base locus of dimension $\ge n-k$;  we denote with
$\mathcal D_k$ the closure of $D_k$ in $\overline{\Eff}(X)$.
\end{definition}
Using the same terminology as Payne \cite{Payne}, we call such divisors {\it ample in codimension $k$}, indeed $\mathcal D_k$ is also the closure of the cone of numerical classes of divisors D such that there is an open set U, with $\OO(D)_{|U}$ ample, whose complement has codimension greater than $k$.
We have the following obvious inclusions:
\begin{equation}\label{filtration-cones-divisors}
\Nef(X)=\mathcal{D}_{n-1} \hookrightarrow \mathcal{D}_{n-2} \hookrightarrow \cdots \hookrightarrow \mathcal{D}_{1} \hookrightarrow \mathcal{D}_{0}=\overline{\Eff(X)}.
\end{equation}

Recall that if $X$ is a Mori dream space, then $\Eff(X)$ is rational polyhedral, hence $D_0=\mathcal D_0=\Eff(X)$. Moreover,  also 
$\mathcal D_{n-1}=\Nef(X)$ and 
$\mathcal{D}_{1}=\Mov(X)$
are rational polyhedral.
 Choi posed the natural question as to whether  all cones $\mathcal D_k$ are rational polyhedral for Mori dream spaces, see \cite[Question 4.4]{ChoiCONES} or \cite[page 1177]{Choi14}. In Theorem  \ref{MCD-cones-divisors} below (cf. Theorem \ref{intro-thm1}), we answer this question affirmatively
 and we also prove that all the cones $D_k$ are closed.

This  is not true in general for non-Mori dream spaces, as the following example shows for $k=n-1$.

\begin{example}
Let $X=\Bl_9(\PP^2)$ be the blow up of the projective plane at nine general points, which is not a Mori dream space. 
Since the dimension of $X$ is $2$, we have $\mathcal{D}_1=\Nef(X)$ %=\overline\Mov$ 
and it is well-known that the nef cone is not finitely generated. Hence $\mathcal D_1$ is not rational polyhedral.

Now we show that $D_1\neq\mathcal{D}_1$.
Indeed the anticanonical curve $F$, which is the strict transform on $X$ of a plane cubic through the nine points, is nef,  that is $F\in\mathcal{D}_1$.
In fact the Mori cone of $X$ is generated by the $(-1)$-curves and by $F$, and we have $F^2=0$ and $F\cdot E=1$ for every $(-1)$-curve $E$,  see \cite{nagata}.

However $F\not\in D_1$. In fact, since there is a unique plane cubic through nine points, the linear system $|F|$ has dimension $0$. Even more:  $|mF|=\{mF\}$, for every integer $m\ge1$, and this goes back to the classical work of Castelnuovo \cite{Castelnuovo91}.  This implies that $F$ is the stable base locus of its own linear system $|F|$.
\end{example}

\section{The geometry of the cones $\mathcal D_k$}\label{sect-geometry-cones}
In this section we show that the stable base locus decomposition of the pseudo-effective cone of a normal $\mathbb Q$-factorial projective variety $X$ with irrationality zero  is compatible with the filtration of cones \eqref{filtration-cones-divisors}, that is, every cone $\mathcal{D}_{k}$ decomposes as a union of stable base locus chambers.
Moreover 
if $X$ is a Mori dream space, since each stable base locus chamber is a union of Mori chambers, we obtain that $\mathcal{D}_{k}$ is a union of such chambers. Furthermore, it is possible to characterise the set of Mori chambers whose union is $\mathcal{D}_{k}$, for every $1\le k\le n-1$.

\begin{lemma}\label{filtration+SBLD} 
Let $X$ be a normal 
 $\mathbb Q$-factorial 
projective variety and assume $h^1(X,\mathcal{O}_X)=0$. Then, 
for every $k$, the cone ${D}_{k}$ is a union of stable base locus chambers. 
\end{lemma}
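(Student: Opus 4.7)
The plan is to identify the cone $D_k$, at least on rational classes, with the set of effective $\mathbb{Q}$-divisor classes whose stable base locus has no component of dimension $\geq n-k$. Granted such a characterisation, the lemma becomes essentially tautological: by definition of the SBL decomposition, the stable base locus is constant on the rational points in the relative interior of each chamber, so the condition defining $D_k$ either holds identically on such a chamber or not at all.

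My first task would be to establish this characterisation. I would invoke two standard facts about $\mathbb{B}(\cdot)$ on effective $\mathbb{Q}$-divisors, namely scale invariance $\mathbb{B}(tD) = \mathbb{B}(D)$ for $t \in \mathbb{Q}_{>0}$ and subadditivity $\mathbb{B}(D_1 + D_2) \subseteq \mathbb{B}(D_1) \cup \mathbb{B}(D_2)$, both immediate from the definition $\mathbb{B}(D)=\bigcap_{m>0}\Bs(mD)$. The hypothesis $h^1(X,\OO_X) = 0$ ensures that $\mathbb{B}$ descends to numerical equivalence classes of $\mathbb{Q}$-divisors. Together these imply that the set
$$\mathcal{S}_k := \{[D] \in \N^1(X)_{\mathbb{Q}}\cap \overline{\Eff}(X) : D \text{ effective, every component of } \mathbb{B}(D) \text{ has dim } < n-k\}$$
is closed under positive addition and rational scaling, hence a convex $\mathbb{Q}$-cone. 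It contains the generators of $D_k$ by definition and is contained in $D_k$, so $D_k \cap \N^1(X)_{\mathbb{Q}} = \mathcal{S}_k$.

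To conclude, I would apply the defining property of the SBL decomposition: membership in $\mathcal{S}_k$ depends only on the chamber a given rational class lies in, not on the particular class. Hence the rational part of $D_k$ is a union of the rational parts of the chambers. Extending to arbitrary real classes via density — using that each open chamber is a relatively open polyhedral cone, and hence the $\mathbb{R}_{>0}$-span of its rational interior points, combined with the convexity of $D_k$ — promotes this to a decomposition of $D_k$ itself as a union of chambers. The main subtlety lies in the behaviour along walls, where the stable base locus can jump; however the walls are themselves lower-dimensional chambers of the decomposition, and the same dichotomy applies to them independently, so no new argument is required beyond care in tracking which boundary pieces are included in $D_k$.
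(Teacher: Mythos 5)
Your proof is correct and follows essentially the same route as the paper's: both reduce the statement to the constancy of the stable base locus on the interior of each chamber, so that the condition defining $D_k$ holds or fails chamber by chamber. The only difference is that you make explicit the scale-invariance and subadditivity of $\mathbb{B}(\cdot)$ needed to identify the generated cone $D_k$ with the locus of classes whose stable base locus has small-dimensional components, as well as the rational-to-real passage and the behaviour on walls, all of which the paper's proof leaves implicit.
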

\begin{proof}
We shall prove that the stable base locus chamber decomposition of the pseudo-effective cone of $X$ induces a chamber decomposition on each cone $D_k$, by showing that every stable base locus chamber is contained in the difference set $D_{k-1}\setminus D_k$, for some $1\leq k  \leq n-1 $.

In fact, let $B$ be a stable base locus chamber, and let $D$ be an effective divisor that lies in $B$.
Let $n-k$ be the dimension of $\mathbb{B}(D)$ or, equivalently, 
the maximum dimension of the components of $\mathbb{B}(D)$. 
The inequality  $\dim\mathbb{B}(D)\ge n-k$ implies that $D$ does not lie in ${D}_k$, while since $\dim\mathbb{B}(D)\le n-k$, $D$ lies in ${D}_{k-1}$, that is, $D \in D_{k-1}\setminus D_k$. Since $B$ is a stable base locus chamber, by definition the same holds for any other divisor in $B$. Thus, $B\subset D_{k-1}\setminus D_k$.
\end{proof}

In the next theorem, 
 we propose a detailed description of the Mori chamber decomposition of the cones $\mathcal{D}_k$ for  Mori dream spaces.

\begin{notation} 
We set $F_1:=\{\textrm{Id}:X\to X\}$.
Let $2\le r\le n-1$ and let ${F}_{r}$ be the set of all small $\mathbb{Q}$-factorial modifications $f_i:X\dashrightarrow X_i$  given by sequences of  flips of subvarieties whose birational preimage on $X$ has dimension $< r$. 
\end{notation}

\begin{theorem}\label{MCD-cones-divisors}
Let $X$ be a Mori dream space and fix $1\le k\le n-1$.
\begin{enumerate}
\item 
The cone $\mathcal{D}_k$  is rational polyhedral and the following  holds:
$${\mathcal D_{k}}= \bigcup_{f_i\in {F}_{n-k}} {f_i^*}(\Nef(X_i)).$$
\item The cone $D_k$ is closed, that is: $D_k=\mathcal{D}_k$.
\end{enumerate}
\end{theorem}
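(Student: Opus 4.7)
The plan is to prove (1) by a double inclusion using the Mori chamber decomposition, from which (2) and the rational polyhedrality follow immediately.

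\textbf{The easy inclusion.} First I would show $\bigcup_{f_i\in F_{n-k}} f_i^*(\Nef(X_i))\subseteq D_k\subseteq \mathcal{D}_k$. Take $D=f_i^*A$ with $A\in \Nef(X_i)$ and $f_i\in F_{n-k}$. Since $X_i$ satisfies condition (2) of the Mori dream space definition, its nef cone is generated by semi-ample classes, and any non-negative $\mathbb{Q}$-combination of semi-ample classes is semi-ample, so $A$ is semi-ample. Because $f_i$ is an isomorphism away from its exceptional locus $E_i\subset X$, pulling back a base-point-free sub-linear system of $|mA|$ produces sections of $|mD|$ with no base points outside $E_i$; hence $\mathbb{B}(D)\subseteq E_i$. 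By definition of $F_{n-k}$, $\dim E_i<n-k$, so $D\in D_k$.

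\textbf{The hard inclusion.} Next I would show $\mathcal{D}_k\subseteq \bigcup_{f_i\in F_{n-k}} f_i^*(\Nef(X_i))$. Since $k\ge 1$, $\mathcal{D}_k\subseteq \Mov(X)=\bigcup_i f_i^*(\Nef(X_i))$. Fix an ample class $A$. For any $D\in \mathcal{D}_k$ and any $\delta>0$, the class $D+\delta A$ lies in the interior of $\mathcal{D}_k$ (a general fact: adding an interior element of a closed convex cone to an arbitrary element lands in the interior). The ray $\{D+\delta A:\delta>0\}$ crosses only finitely many Mori chamber walls, so on some interval $(0,\delta_0)$, away from a discrete set, $D+\delta A$ sits in the interior of a single closed chamber $C_{j^*}=f_{j^*}^*(\Nef(X_{j^*}))$. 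Letting $\delta\to 0^+$ yields $D\in C_{j^*}$. Moreover $\text{int}(C_{j^*})\cap \text{int}(\mathcal{D}_k)$ is a non-empty open set and $\mathcal{D}_k=\overline{D_k}$, so there is a point of $D_k$ in $\text{int}(C_{j^*})$; since the stable base locus is constantly $E_{j^*}$ throughout $\text{int}(C_{j^*})$, this forces $\dim E_{j^*}<n-k$, i.e.\ $f_{j^*}\in F_{n-k}$.

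\textbf{Rational polyhedrality and (2).} Combining the two inclusions gives $\mathcal{D}_k = D_k = \bigcup_{f_i\in F_{n-k}} f_i^*(\Nef(X_i))$, which already contains (2). Each $f_i^*(\Nef(X_i))$ is rational polyhedral, $F_{n-k}$ is finite, and the union is convex (being the cone $\mathcal{D}_k$); therefore $\mathcal{D}_k$ coincides with the conical hull of the finitely many rational generators of the individual chambers, and is rational polyhedral.

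\textbf{Main obstacle.} The delicate point is the hard inclusion: a limit point $D\in \mathcal{D}_k$ may sit on the common boundary of several Mori chambers, some corresponding to SQMs in $F_{n-k}$ and some not, and one must certify it belongs to at least one good chamber. The smoothing trick $D\leadsto D+\delta A$, combined with the finiteness of the Mori fan and the openness of chamber interiors, reduces the question to the interior of a single chamber, where the stable base locus is constant and easy to read off from the associated SQM.
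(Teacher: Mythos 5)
Your proof is correct and rests on the same two facts that drive the paper's argument: the decomposition of $\Mov(X)$ into the finitely many chambers $f_i^*(\Nef(X_i))$, and the identification of the stable base locus of a class in the interior of such a chamber with the indeterminacy locus of $f_i$. The packaging is different enough to be worth comparing. The paper first shows (Lemma \ref{filtration+SBLD}) that each $D_k$ is a union of stable base locus chambers and then refines to nef chambers; you instead run a double inclusion, using the perturbation $D\mapsto D+\delta A$ and the density of $D_k$ in $\mathcal D_k$ to place every point of $\mathcal D_k$ in a ``good'' chamber. This works, but two steps deserve tightening: (i) the claim that the ray $\{D+\delta A\}$ eventually lies in the \emph{interior} of a single chamber can fail if the segment sits inside a wall; it suffices instead that some closed maximal chamber $C_{j^*}$ contains $D+\delta_m A$ for a sequence $\delta_m\to 0$ (hence contains $D$, by closedness) and that its interior meets the open set $\operatorname{int}(\mathcal D_k)$, which is automatic because $C_{j^*}$ is full-dimensional; (ii) the witness point of $D_k$ inside $\operatorname{int}(C_{j^*})$ should be taken to be a rational class, so that it is a non-negative \emph{rational} combination of generators of $D_k$ and its stable base locus is therefore genuinely of dimension $<n-k$. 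Your easy inclusion, proving $\bigcup_{f_i\in F_{n-k}}f_i^*(\Nef(X_i))\subseteq D_k$ chamber by chamber via semi-ampleness, gives part (2) in one stroke, whereas the paper applies the same semi-ampleness argument only to the extremal rays of $\mathcal D_k$; both proofs conclude rational polyhedrality identically, from convexity of a finite union of rational polyhedral cones.
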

\begin{proof}
Recall that, for a Mori dream space, every closed stable base locus chamber is a finite union of Mori chambers, see  Section \ref{section-prelim-MCD}.
By {Lemma \ref{filtration+SBLD},  $D_{k}$} is a union of stable base locus chambers and hence its closure $\mathcal D_k$ is a finite union of Mori chambers.
We shall now characterise which are the Mori chambers appearing in the union as follows.
Let $f_i:X \dashrightarrow X_i$ be a small $\mathbb{Q}$-factorial modification of $X$ and let $f_i^\ast(\Nef(X_i))$ be the corresponding Mori chamber. Let $D\in f_i^\ast(\Nef(X_i))$ be any  pull-back of an ample class on $X_i$: we have that $D$ is movable on $X$.
The stable base locus of  $D$ equals the indeterminacy locus of $f_i$ (see e.g. \cite{Hu-Keel, okawa}). 
Hence, the latter is a union of subvarieties of $X$ of dimension $< n-k$ if and only if $D$ lies in  $D_{k}$. This argument is independent of $D$ but rather depends on its base locus, and it applies to any pull-back on $X$ of an ample divisors on $X_i$. Therefore, the interior of the Mori chamber  $f_i^\ast(\Nef(X_i))$ is contained in $D_k$.  Finally, taking closures, we have that  $f_i^\ast(\Nef(X_i))\subseteq \mathcal{D}_{k}$ if any only if $f_i\in F_{n-k}$.

Since $X$ is a Mori dream space, there are finitely many Mori chambers and each is a rational polyhedral cone. Therefore the 
above argument shows that for every $k$, the convex cone $\mathcal{D}_k$ is a union of finitely many rational polyhedral cones. Since $\mathcal{D}_k$ is a cone,  it is a rational polyhedral cone. This proves (1).

We now prove (2), i.e. that $\mathcal{D}_k=D_k$. In order to do so, we show that every extremal ray of $\mathcal D_k$ lies in $D_k$. Let $E$ be an effective divisor on $X$ spanning an extremal ray. Then $E$ spans an extremal ray of some Mori chamber $f_j^\ast(\Nef(X_j))\subseteq \mathcal D_k$ and hence the pull-back of a semi-ample divisor $E_j$ on $X_j$, where $f_j:X\dasharrow X_j$, with $f_j\in F_{n-k}$, by (1).
Let $m\ge1$ be an integer such that $mE_j$ is basepoint free.  The stable base locus of $mE=f_j^\ast(mE_j)$  is therefore the indeterminacy locus of $f_j$ which is a union of subvarieties of $X$ of dimension strictly less than $n-k$. Since $\mathbb{B}(E)=\mathbb{B}(mE)$,  we conclude that $E$ lies in $D_k$.

\end{proof}
\begin{remark} For $k=1$, Theorem \ref{MCD-cones-divisors} is just property (3) of the definition of Mori dream spaces. 
For $k=n-1$ it is always true {by Kleiman's theorem}.  \end{remark}

For non-Mori dream spaces, none of the statements of Theorem \ref{MCD-cones-divisors} hold in general; in fact the properties may hold or fail as we vary $k$. The following is an instance of this.

\begin{example}
Let  $X=X^5_9$ be the blow-up of $\PP^5$ at nine points in general position. Recall that $X$ is not a Mori dream space, in particular $\mathcal D_{0}=\overline{\Eff(X)}$ is not finitely generated.
On the other hand, the Mori cone $\mathcal C_4$ is finitely generated by lines, see e.g. \cite[Theorem 3.2]{DP-positivityI}, hence  its dual, $\Nef(X)$,  is rational polyhedral. 
Moreover the following holds: $\mathcal D_{4}=D_4=\Nef(X)$.  Indeed, the nef cone $\Nef(X)$ is generated by the following classes: $H, H-E_i, 2H-\sum_{i\in I}E_i$, for every $3\le |I|\le 9$ as one can easily check with the help of any computer software that can compute cone duals, such as \emph{Magma}  \cite{Magma}. It is an easy exercise to show that each of them is basepoint free, hence it lies in $D_4$.

It would be interesting to determine which, if any,  of the intermediate cones $\mathcal D_{1}$, $\mathcal D_{2}$ and $\mathcal D_{3}$ are  finitely generated.
\end{example}

\section{Weak duality}\label{sectionb-weak-duality}
Duality of the cones $\mathcal D_k$ and $\mathcal C_k$ is not to be expected in general, but a weaker version for Mori dream spaces holds, that takes into account $k$-moving curves not just on $X$ but also on its small modifications. We will refer to this as \emph{weak duality}, as opposed to \emph{strong duality} that holds whenever $\mathcal C_k=\mathcal D_k^{\vee}$, for every $k$. 
%This will be the subject of this section.

It is easy to show that $\Mov(X)\subseteq \mathcal C_1^\vee$. In fact if $D$ is in the dual of $\mathcal C_1$ then $D\cdot C\ge0$ for every $1$-moving curve $C$. If it is not in the dual, then there is a $1$-moving curve $C$ such that $D\cdot C<0$, and so the divisor ($(n-1)$-cycle) spanned by the algebraic family of $C$ is a fixed component of the base locus of $D$. 
If $n=2$, then it holds that $\Mov(X)=\mathcal C_1^\vee$. In fact the first cone is the nef cone, the second cone is the Mori cone.

A similar inclusion of cones is valid in general.
\begin{remark}\label{easy-rmk}
If $C\in\mathcal C_k$, we know that $C$  moves in a  family that sweeps out an effective cycle $V$ of dimension  $n-k$. If $C\not\in \mathcal D^\vee_k$, then there exists an effective  divisor $D$ such that $C\cdot D<0$ and 
whose stable base locus does not have any component of dimension $\ge n-k$. But then $V$ is in the {stable} base locus of $D$ which is a contradiction.  This proves that 
$$\mathcal{C}_k\subseteq \mathcal D_k^\vee.$$
\end{remark}
We adopt the definition of $k$-moving curves on small modifications proposed by Payne in \cite{Payne}.

\begin{definition}
 We say that a curve class $c\in \N_1(X)_\R$ is 
 \emph{bir-$k$-moving} on $X$ if 
there exists a 
small modification $f_i:X \dashrightarrow X_i$, such that there is a curve
$C\subset X_i$, in the class corresponding to $c$,  that moves in a family which sweeps out a cycle $f_i(V)$ birational to a cycle $V\subset X$ of codimension  $k$. 

We denote by $\mathcal{C}_k^{bir}$ the {closure of the} cone generated in $\N_1(X)_{\R}$ by all the 
 bir-$k$-moving classes on $X$.
\end{definition}

In \cite{Choi-Gongyo}, a version of  the cone theorem for $k$-moving curves on small modifications of $X$ is discussed, in the setting of the  minimal model program.

Applying the same idea as in Remark \ref{easy-rmk}, one can easily see that the inclusion $\mathcal{C}^{bir}_k\subseteq \mathcal D_k^\vee$ holds.
Using the results of Section \ref{sect-geometry-cones}, we prove that the latter inequality holds with equality, hence
 recovering a result that was first proved by Payne 
for complete $\mathbb{Q}-$factorial toric varieties \cite{Payne} and later generalised by Choi for log Fano varieties and for Mori dream spaces \cite[Corollary 4.3]{ChoiCONES}, running the log minimal model program. 

The main ingredient of our proof is the Mori chamber decomposition of the effective cone of $X$, and of $\mathcal{D}_k$,  proved in Theorem \ref{MCD-cones-divisors}.

\begin{theorem}[{Weak duality theorem for Mori dream spaces}]\label{PayneChoi}
If $X$ is a  Mori dream space then weak duality holds, that is 
 $$\mathcal{C}_k^{bir}=\mathcal D_k^\vee.$$
\end{theorem}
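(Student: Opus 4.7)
The plan is to establish both inclusions $\mathcal{C}_k^{Bir}\subseteq \mathcal{D}_k^\vee$ and $\mathcal{D}_k^\vee \subseteq \mathcal{C}_k^{Bir}$. The first is a routine extension of Remark \ref{easy-rmk} to small modifications; the real content lies in the reverse inclusion, which will exploit the chamber decomposition of $\mathcal{D}_k$ from Theorem \ref{MCD-cones-divisors} to identify the extremal rays of the dual cone geometrically.

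For $\mathcal{C}_k^{Bir}\subseteq \mathcal{D}_k^\vee$: given a bir-$k$-moving class $c$ witnessed by an SQM $f_i\colon X\dashrightarrow X_i$ and a curve $C\subset X_i$ whose family sweeps out a cycle $f_i(V)$ of dimension at least $n-k$, and given $D\in\mathcal{D}_k$, I would invoke the natural identifications $\N^1(X)\cong \N^1(X_i)$ and $\N_1(X)\cong \N_1(X_i)$ induced by $f_i$, which preserve the intersection pairing. Under these, $c\cdot D=[C]_{X_i}\cdot D_i$, where $D_i$ denotes the strict transform of $D$ on $X_i$. Since the Mori chamber structure is preserved under SQMs, $D_i$ still lies in $\mathcal{D}_k(X_i)$, so $\mathbb{B}(D_i)$ has dimension strictly less than $n-k$ on $X_i$. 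Consequently $f_i(V)$ is not contained in $\mathbb{B}(D_i)$, and the intersection is non-negative by the argument of Remark \ref{easy-rmk} applied on $X_i$.

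For the reverse inclusion $\mathcal{D}_k^\vee\subseteq \mathcal{C}_k^{Bir}$: by Theorem \ref{MCD-cones-divisors}, the cone $\mathcal{D}_k=\bigcup_{f_i\in F_{n-k}}f_i^*(\Nef(X_i))$ is rational polyhedral, so its dual $\mathcal{D}_k^\vee$ is rational polyhedral as well and its extremal rays are dual to the facets of $\mathcal{D}_k$. Each such facet is a codimension-one face of some nef chamber $f_i^*(\Nef(X_i))$, $f_i\in F_{n-k}$, that is not shared with another chamber indexed by $F_{n-k}$; dually it corresponds to an extremal ray of $\overline{\NE(X_i)}$. By the Cone Theorem, the associated contraction is a Mori fibration, a divisorial contraction, or a small (flipping) contraction. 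In the flipping case, the facet remains internal to $\mathcal{D}_k$ precisely when the flipped locus has dimension strictly less than $n-k$ (yielding another $f_j\in F_{n-k}$); hence the boundary facets of $\mathcal{D}_k$ coming from flips are exactly those whose flipped locus has dimension at least $n-k$. In each of the three cases producing a boundary facet (fibration, divisorial, or flip of dimension $\geq n-k$), the generating curve class of the extremal ray of $\overline{\NE(X_i)}$ sweeps out a subvariety of $X_i$ of dimension at least $n-k$, and therefore, via $f_i$, represents a bir-$k$-moving class on $X$. Thus every extremal ray of $\mathcal{D}_k^\vee$ lies in $\mathcal{C}_k^{Bir}$.

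I expect the principal obstacle to be the precise combinatorial identification of the boundary facets of $\mathcal{D}_k$ with the external extremal rays of the various $\overline{\NE(X_i)}$. One must carefully verify the dichotomy between internal flipping walls that keep us inside $F_{n-k}$ and external walls (fibration, divisorial, or high-dimensional flip) that bound $\mathcal{D}_k$, and confirm that each external wall provides a curve class whose sweeping subvariety on the relevant $X_i$ is birational, through the SQM, to a codimension-$k$ cycle on $X$ as required by the definition of bir-$k$-moving.
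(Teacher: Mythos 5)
Your argument for the substantive inclusion $\mathcal{D}_k^\vee\subseteq\mathcal{C}_k^{bir}$ follows essentially the same route as the paper: both identify the extremal rays of $\mathcal{D}_k^\vee$ with facets of $\mathcal{D}_k$ via Theorem \ref{MCD-cones-divisors}, realize each such facet as a wall of a nef chamber $f_-^*(\Nef(X_-))$ with $f_-\in F_{n-k}$, and argue that the dual curve class, being an extremal ray of $\overline{\NE(X_-)}$, sweeps out a locus of dimension at least $n-k$ on $X_-$ that is birational to a cycle on $X$. The only real difference is presentational: you invoke the fibration/divisorial/flip trichotomy on $X_-$ directly, whereas the paper takes a divisor $D$ in the chamber just beyond the wall and compares its stable base locus (the union of the indeterminacy locus of $f_-$ and $f_-^{-1}(V_-)$) with the condition $D\notin\mathcal{D}_k$; the latter has the advantage of simultaneously yielding that the swept locus is not contained in the indeterminacy locus of $f_-$, hence is genuinely birational to a cycle of codimension at most $k$ on $X$ --- exactly the point you flag at the end as still needing verification. (A small caveat: the trichotomy for an arbitrary facet of $\Nef(X_-)$ does not come from the Cone Theorem, which only governs $K$-negative rays; for a Mori dream space it comes from the semiampleness of the generators of the nef cone.)

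There is, however, a genuine error in your treatment of the easy inclusion $\mathcal{C}_k^{bir}\subseteq\mathcal{D}_k^\vee$: the claim that ``$D_i$ still lies in $\mathcal{D}_k(X_i)$'' is false for $2\le k\le n-1$. The Mori fan is preserved under the identification $\N^1(X)_\R\cong\N^1(X_i)_\R$, but the subset of chambers constituting $\mathcal{D}_k$ is not: for instance, an ample divisor on a threefold $X$ lies in $\mathcal{D}_2(X)=\Nef(X)$, yet its transform under a flop $X\dashrightarrow X_i$ is negative on the flopped curve and so does not lie in $\Nef(X_i)=\mathcal{D}_2(X_i)$. The repair is to run the argument on $X$ rather than on $X_i$: if $c\cdot D<0$ then the swept cycle $f_i(V)$ is contained in $\mathbb{B}(D_i)$; since $\mathbb{B}(D)$ and $\mathbb{B}(D_i)$ agree on the open locus where $f_i$ is an isomorphism, and $V$ meets that locus in a dense subset (this is the birationality built into the definition of bir-$k$-moving), one concludes $V\subseteq\mathbb{B}(D)$, contradicting $\dim V=n-k$ together with $D\in D_k=\mathcal{D}_k$. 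This is a fixable slip in the half of the statement the paper dispatches in one line, but as written the step would fail.
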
 
\begin{proof}

 We only have to prove that the following inclusion holds: $\mathcal D_k^\vee\subseteq \mathcal{C}^{bir}_k.$ 
Every facet $\mathcal F$ of $\mathcal D_k$ corresponds, by duality, to an extremal ray of $\mathcal D_k^\vee$, spanned by a curve class $c=c(\mathcal{F})\in \overline{\textrm{NE}(X)}$. We shall prove that  $c\in \mathcal{C}^{bir}_k$. We consider the following cases separately.

Assume first that  $\mathcal{F}$ is a facet of $\mathcal D_0=\overline{\textrm{Eff}(X)}$. Then $c$ is a moving curve class and the statement follows.

Now, let $\mathcal F$ be a facet of $\mathcal D_k$ but not a facet of $\mathcal D_0$. 
Then there is a Mori chamber inside $\mathcal D_k$ and adjacent to $\mathcal F$, that means that a facet of the chamber is contained or is equal to $\mathcal F$. Let $f_-^*(\Nef(X_-))\subset \mathcal D_k$ be one such chamber, for $f_-:X\dashrightarrow X_-$ a  small modification of $X$. It follows that $c\in \N_1(X)_\mathbb{R}\cong \N_1(X_-)_\mathbb{R}$ generates an extremal ray of the Mori cone of $X_-$.
Let $V_-\subset X_-$ be a subvariety of $X_-$ of maximal dimension swept out by a family of irreducible curves of class $c$. 

Now, let $f_+^*(\Nef(X_+))\ast \textrm{ex}(f_+)$ be a Mori chamber  outside $\mathcal D_k$, see \cite[Proposition 1.11]{Hu-Keel}, 
and adjacent to $\mathcal F$, that has a common facet with $f_-^*(\Nef(X_-))$ contained in $\mathcal F$. The contracting birational map $f_+:X\dashrightarrow X_+$ can be decomposed as $f_+=g\circ f_-$, where $g: X_-\dashrightarrow X_+$ corresponds to crossing the wall $\mathcal F$, and the indeterminacy locus of $g$ is $V_-$.
Take $D\in f_+^*(\Nef(X^+))\ast \textrm{ex}(f_+)$ a divisor on $X$ whose ray lies in the relative interior of the Mori chamber. 
Since $D\notin \mathcal{D}_k$, then $D$ must have a component of its stable base locus of codimension $\le k$. But the stable base locus of $D$, as a divisor on $X$, is the indeterminacy locus of $f_+$, which is the union of the indeterminacy locus of $f_-$ and $V:=f_-^{-1}(V_-)$.
Since by Theorem \ref{MCD-cones-divisors}, the indeterminacy locus of $f_-$ has codimension $>k$, then $V$ must have codimension $p\le k$.
In particular,  $V$ is not in the indeterminacy locus of $f_-$ and therefore $V$ and $V_-$ are birational.
We conclude that $c \in \mathcal C_{p}^{bir} \subseteq \mathcal C_{k}^{bir}$.
\end{proof}

\subsection{Losev-Manin's moduli $3$-space}\label{toric -ex-section}
Strong duality is a rare property even for toric varieties, and it is not preserved under small modifications for dimension three or higher. 
 An explicit example of a toric threefold for which strong duality does not hold was proposed by Payne in \cite[Example 1]{Payne}.

In this section we shall discuss another interesting smooth threefold for which strong duality holds, i.e. such that $\mathcal{D}_1^\vee=\mathcal{C}_1$, but for which it fails, i.e. $\mathcal{D}_1^\vee\supsetneq\mathcal{C}_1$, as soon as we flop one or more curves. 
Let $X$ be the $3$-dimensional Losev-Manin moduli space  introduced in \cite{LM} as a toric compactification of the moduli space of rational curves with six marked points. 
It can be interpreted as the blow-up of $\PP^3$ at its four coordinate points and, subsequently, along the strict transforms of the six coordinate lines. 
We denote by $H$ the class of the pull-back on $X$ of a hyperplane of $\PP^3$, by $E_i,\ i=1,\dots,4$, the strict transforms of the exceptional divisors of the points and by $E_{ij}, \ 1\le i< j \le 4$, the exceptional divisors of the lines, which together generate the Picard group of $X$. 
Similarly, we denote by $h=H^2, e_i=-E_i^2, e_{ij}=HE_{ij}=E_iE_{ij}$ the classes of a general line, a general line in $E_i$ and a vertical fibre of $E_{ij}$ respectively: they generate the Chow group of $1-$cycles.
The matrix of the intersection pairing  $\N^1(X)_\mathbb{R}\times \N_1(X)_\mathbb{R}\to \mathbb{R}$,   with respect to the above bases, is diagonal and defined by:
$$
H\cdot h=1, \ E_i \cdot e_i=E_{ij}\cdot e_{ij}=-1.
$$

We denote by $E_{ijk}=H-E_i-E_j-E_k-E_{ij}-E_{ik}-E_{jk}$ the strict transform of a fixed hyperplane of $\PP^3$ spanned by three coordinate points. 
The following divisors generate the extremal rays of the effective cone $\mathcal{D}_0$ of $X$: 
$$E_i, E_{ij}, E_{ijk},$$
for all distinct $i,j,k$.
Since the above classes correspond to the torus invariant prime divisors of $X$, they give rise to a set of generators of the Cox ring (see \cite[Section 2.1.3]{ADHL}).

Now, using \cite[Prop. 3.3.2.3]{ADHL}, %\blue{comment 7 TO DO}
we can describe the movable cone of $X$ via its defining inequalities. In other terms, one can show that the extremal rays of the dual cone $\mathcal{D}_1^\vee$ are generated by the following effective curve classes, up to permutations of indices:
\begin{itemize}
\item curves sweeping out $E_{i}$: 
$e_i-e_{ij}$,
and
$2e_i-e_{ij}-e_{ik}-e_{il}$, 
\item curves sweeping out $E_{ij}$: $e_{ij}$ and $h-e_i-e_j+e_{ij}$,
\item curves sweeping out $E_{ijk}$: 
$h-e_i-e_{jk}$ and
$h-e_{ij}-e_{ik}-e_{jk}$.
\end{itemize}
This computation can be checked using any computer software that can compute cone duals, such as \emph{Magma} \cite{Magma}. 
While we are guaranteed that the effective cones $\mathcal D_0$ and the Mori cone $\mathcal C_0$ are dual to each other for all {small $\mathbb{Q}-$ factorial modifications of $X$, the same need not happen for the movable cone $\mathcal D_1$ and the cone of $1$-moving curves $\mathcal C_1$; whether or not it does, it depends on the model. In fact we will now discuss a small modification of $X$ for which $\mathcal D_1$ and  $\mathcal C_1$ are not dual cones.

In order to do this, it is convenient to describe $X$, and its small modifications, using the language of toric varieties. For a general reference on toric varieties, see 
\cite{toric-book} or \cite{Fulton}.
Since $X$ is obtained from $\PP^3$ by the iterated blow-up of torus invariant subvarieties, then $X$ is toric.  Let $N\cong\mathbb{Z}^3$ be a three-dimensional lattice; the rays of $\Sigma_X$ are generated  by the following primitive vectors: 
\begin{align*}  
& v_1=(1,1,1), v_2=(0,0,-1), v_3=(0,-1,0), v_4=(-1,0,0), & \\
& v_{12}=(1,1,0), v_{13}=(1,0,1), v_{14}=(0,1,1), & \\ 
& v_{23}=(0,-1,-1), v_{24}=(-1,0,-1), v_{34}=(-1,-1,0), & \\
& v_{123}=(1,0,0), v_{124}=(0,1,0), v_{134}=(0,0,1), v_{234}=(-1,-1,-1).&
\end{align*}
The ray generated by the vector $v_I$, which we will denote with $\rho_I$, corresponds to the boundary divisor $E_I$, for every index set $I\subset\{1,\dots,4\}$ of cardinality $1\le |I|\le 3$.
The  set of maximal cones, which are all simplicial, is the following: 
$$\textrm{Cone}(v_i,v_{ij},v_{ijk}),  \forall\{i\}\subset\{i,j\}\subset \{i,j,k\}\subset\{1,\dots,4\}.$$
The fan of $X$ is dual to a $3$-dimensional permutohedron, which is a polytope with $8$ hexagonal faces (corresponding to the boundary divisors $E_{ijk}$ and $E_i$) and six quadrilateral faces (corresponding to the $E_{ij}$'s). 
Figure \ref{figureLM} displays a section of a portion of the fan of $X$ lying in the plane of $N\otimes_{\mathbb{Z}}\mathbb{R}\cong \mathbb{R}^3$ of equation $x+y+z=a$, with $a>0$.
\begin{figure}[h]
\vskip-4em
\includegraphics[scale=0.85]{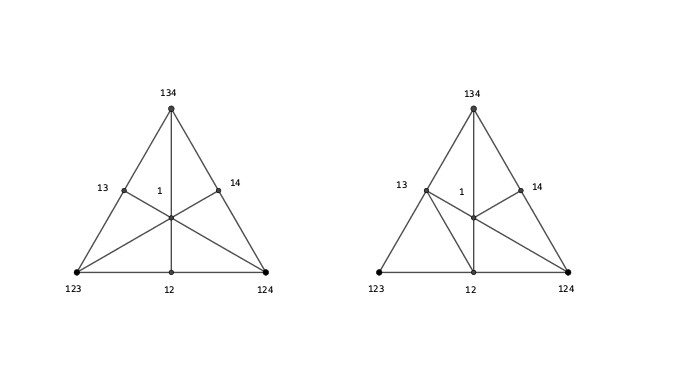}\vskip-5em
\caption{A section of a portion of the fan $\Sigma_X$ and of $\Sigma_{Y}$}
\label{figureLM}
\end{figure}
Consider the cone $\textrm{Cone}(v_{1},v_{123})$, which is a common two dimensional face of $\textrm{Cone}(v_1,v_{12},v_{123})$ and $\textrm{Cone}(v_1,v_{13},v_{123})$. The corresponding invariant subvariety is a floppable curve whose numerical class is $e_1-e_{12}-e_{13}$ and which corresponds to a fixed line inside $E_1$. Let $X\dashrightarrow Y$ be the flop of such curve.
The fan $\Sigma_Y$ differs from $\Sigma_X$ only within the quadrilateral cone generated by $v_{1}, v_{12}, v_{13}, v_{123}$ and, in particular, the image 
  $-e_1+e_{12}+e_{13}$ corresponds to the cone $\textrm{Cone}(v_{12},v_{13})\in \Sigma_Y$.

We argue that on $Y$ strong duality does not hold.
Consider the class $\gamma=e_1-e_{14}$ which is $1-$moving on $X$. The intersections  $\gamma\cdot E_1=-1$ and $\gamma\cdot E_{123}=1$ imply that every representative of $\gamma$ is contained in $E_1$ and it intersects the hyperplane $E_{123}$, where by abuse of notation we use the same symbols for cycles on $X$ and their birational images on $Y$. Since the two divisors  are disjoint in $Y$, no irreducible curve with class $\gamma$ exists on $Y$. In particular $\gamma\notin \mathcal{C}_1(Y)$; thus strong duality does not hold for $Y$, that is $\mathcal{C}_1(Y)\subsetneq \mathcal{D}_1^\vee$.

\section{Strong duality for blow ups of $\PP^n$}\label{section-strongduality-P^n}

In this section we consider $X=X^n_s$, i.e.\ the blow-up of $\PP^n$ at $s$ points in general position, for the pairs $(n,s)$ for which $X$ is a Mori dream space. 
These varieties are classified in \cite{Mukai04, Mukai05, CT}
and they fit in the following list:
 \begin{itemize}
 \item $X^2_s$ for $s\le 8$; 
 \item  $X^3_s$ for $s\le 7$; 
 \item  $X^4_s$ for $s\le 8$;
  \item $X^n_s$ for $n\ge 5$ and $s\le n+3$.
  \end{itemize}

The main result of this section is the proof of  Theorem \ref{intro-thm2}. More precisely, we show that the cone of $k$-moving curves is dual to the cone of divisors that are ample in codimension $k$,  for every Mori dream $X^n_s$. 
In the surface case $n=2$ there is nothing to prove.
The case $X^n_{s}$ with $s\le n+3$ and $n\ge3$,  will be treated in Section \ref{section-n+3}, while $X^3_7$ and $X^4_8$ will be considered in Section \ref{section-n+4}.

\subsection{Blow up of $\PP^n$ in $s\le n+3$ general points}\label{section-n+3}

Let $X=X_{s}^n$ be the
blow up of $\PP^n$ at $s\le n+3$ general points and assume that 
$n\ge3$.
We recall some notation from \cite{BraDumPos3}.
For any subset $I\subset\{1,\ldots,s\}$ we denote by $L_I$ the strict transform of the linear span of the points indexed by $I$. For $s=n+3$, we denote 
by $C$  the strict transform of the unique rational normal curve of degree $n$ passing through all points  and by $\sigma_t$ the strict transform of its $t$-secant variety, where we set $\sigma_1:=C$.
Let $J(L_I,\sigma_t)$ denote the strict transform of the join of the linear span of the points indexed by $I$ and the $t$-secant variety of the rational normal curve.
We make the following identifications: $J(L_I,\sigma_t)=\emptyset$ if $t=|I|=0$, $J(L_I,\sigma_t)=E_i$ if $t=0$ and $I=\{i\}$,  $J(L_I,\sigma_t)=L_I$ if $t=0$ and $|I|>1$, and $J(L_I,\sigma_t)=\sigma_t$ if $|I|=0$. 
It is easy to see that the join $J(L_I,\sigma_t)$ has dimension $2t+|I|-1$, unless $J(L_I,\sigma_t)=E_i$.

For $s\le n+3$, the 
inequalities cutting out the effective cone of $X^n_s$  were obtained in \cite{BraDumPos3} and a formula for the dimension of all linear systems of divisors on $X^n_s$ was proved in \cite{LPS}.
Moreover, by  \cite[Lemma 4.1]{BraDumPos3} and \cite[Proposition 4.2]{DumPos}, if $H$ denotes the general hyperplane class in $X^n_s$ and $E_1,\dots,E_{n+3}$ denote the exceptional divisors, then for an effective divisor
\begin{equation}\label{divisor on X^n_s}
D=dH-\sum_{i=1}^{n+3} m_iE_i,\end{equation}
the following integer 
\begin{equation} \label{k_J}
     \kappa_{J(L_I,\sigma_t)} = - (|I| + (n+1)t-1)d + \sum_{i\in I} (t+1)m_i + \sum_{i \notin I} t m_i, 
\end{equation}
 when positive, is the exact multiplicity of containment of  the join $J(L_I,\sigma_t)$ in the base locus of  $|D|$.
Notice that since equation \eqref{k_J} is linear in $d$ and in the $m_i$'s, then if the join $J(L_I,\sigma_t)$
is in the base locus of a linear system, it is a component of its stable base locus.

Formulas \eqref{k_J} induce the Mori chamber decomposition of the effective cone of $X^n_s$, already studied by Mukai. This turns out to coincide with its stable base locus decomposition.
This is the content of the following result that already appeared in \cite[Theorem 2.60]{BCP},
but that we include here for the sake of completeness. 

\begin{theorem} \label{n+3: SBLD=MCD}
For $X=X_{n+3}^n$, the Mori chamber decomposition and the stable base locus decomposition of the effective cone  coincide %and
and they are induced by the following hyperplane arrangement 
\begin{equation}\label{hyperplane-arrangement-n+3}
\{\kappa_{J(L_I,\sigma_t)}=0: 0\le t \le \frac{n}{2}, 0\le |I|\le n-2t\},
\end{equation} 
where $\kappa_{J(L_I,\sigma_t)}$ is defined in \eqref{k_J}.
\end{theorem}
\begin{proof}
Notice that the hyperplane arrangement \eqref{hyperplane-arrangement-n+3} 
is in correspondence with the set
of subvarieties 
\begin{equation}\label{set-of-joins-n+3}
\{J(L_I,\sigma_t): 0\le t \le \frac{n}{2}, 0\le |I|\le n-2t \}.
\end{equation}
More precisely, \eqref{hyperplane-arrangement-n+3}
induces 
a chamber decomposition of the effective cone of divisors of $X$, such that the divisors in the interior of any given chamber contain the same joins in the stable base locus.
Such decomposition is refined by the stable base locus decomposition of the effective cone, because a priori effective divisors might have stable base locus that is not of  the form $J(L_I,\sigma_t)$. This turns out not to be the case for $X^n_{n+3}$.  

As explained in \cite[Section 4.3]{Araujo-Casagrande}, it follows from the work of Mukai (\cite{Mukai05}) that the Mori chamber decomposition of the effective cone of $X$
is given by  \eqref{hyperplane-arrangement-n+3}.  
Since the Mori chamber decomposition refines 
 the stable base locus decomposition, which is a further refinement of \eqref{hyperplane-arrangement-n+3}, this concludes the proof.
\end{proof}

 Recall that for a Mori dream space we have $\mathcal D_k= D_k$, by Thorem \ref{MCD-cones-divisors}.
 Using Theorem \ref{n+3: SBLD=MCD}, we can describe the cones $D_k$ of $X^n_{s}$ $s\le n+3$, with their inequalities. 

\begin{proposition} \label{n+3: D_k}
For any integer $k$ with $1\le k \le n-1$. The  cone  $\mathcal D_k$ of  divisors on $X^n_{n+3}$ that are ample in codimension $k$
 is cut out by the following inequalities:
\begin{enumerate}
\item[(\RN 1)] $0\le d$ and $m_i \le d $, for every $i=1, \ldots, n+3$;
\item[(\RN 2)] $-m_i + \sum_{j=1}^{n+3} m_j \le nd$, for every $i=1, \ldots, n+3$;
\item[(\RN 3$_k$)] $\kappa_{J(L_I,\sigma_t)}\le 0$, for every $0\leq t \le n/2$ and $|I|=n-2t-k+1$;
\item[(\RN 4)] $0 \le m_i$, for every $i=1, \ldots, n+3$.
\end{enumerate}
\end{proposition}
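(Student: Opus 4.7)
The plan is to invoke Theorem \ref{MCD-cones-divisors} to reduce to an explicit description of $D_k$, and then to combine the stable base locus decomposition given by Theorem \ref{n+3: SBLD=MCD} with a monotonicity argument on the integers $\kappa_{J(L_I,\sigma_t)}$ in order to isolate the finite set of active inequalities among the infinitely many $\kappa$-conditions that a priori cut out $\mathcal{D}_k$.

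By Theorem \ref{MCD-cones-divisors}(2), $\mathcal D_k = D_k$, so $D \in \mathcal D_k$ if and only if $D$ is effective and $\dim \mathbb B(D) < n - k$. Theorem \ref{n+3: SBLD=MCD} identifies the stable base locus of an effective $D = dH - \sum m_i E_i$ as the union of the joins $J(L_I, \sigma_t)$ with $\kappa_{J(L_I, \sigma_t)}(D) > 0$, together with the exceptional divisors $E_i$ for which $m_i < 0$. Since $\dim E_i = n-1 \ge n-k$ for $k \ge 1$, the requirement $E_i \not\subset \mathbb B(D)$ is precisely (IV), and the requirement on non-exceptional joins is $\kappa_{J(L_I,\sigma_t)}(D) \le 0$ whenever $|I| + 2t - 1 \ge n - k$. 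The remaining facet inequalities of $\overline{\Eff}(X^n_{n+3})$ not already captured by (IV) or by the divisorial portion of (III$_k$) are precisely (I) and (II), as established in \cite{BraDumPos3}.

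It therefore remains to show that (III$_k$) --- namely $\kappa \le 0$ on the boundary $|I| + 2t = n - k + 1$ --- suffices, under (I) and (II), to enforce $\kappa \le 0$ at every $(I', t')$ with $|I'| + 2t' \ge n - k + 1$. A direct computation from \eqref{k_J} yields
\begin{align*}
\kappa_{J(L_{I \cup \{j\}}, \sigma_t)} - \kappa_{J(L_I, \sigma_t)} &= -(d - m_j), \\
\kappa_{J(L_I, \sigma_{t+1})} - \kappa_{J(L_I, \sigma_t)} &= -\bigl((n+1)d - \textstyle\sum_i m_i\bigr).
\end{align*}
Under (I), $d - m_j \ge 0$; under (I) and (II), $\sum_i m_i \le nd + m_j \le (n+1)d$, so both differences are $\le 0$. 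Hence $\kappa_{J(L_I, \sigma_t)}$ is weakly decreasing when $|I|$ grows (for fixed $t$) and when $t$ grows (for fixed $I$). Starting from any $(I', t')$ with $|I'| + 2t' \ge n - k + 1$, one descends by alternately removing a point from $I'$ or decreasing $t'$ by one, reaching a boundary pair $(I, t)$ with $|I| + 2t = n - k + 1$, $I \subseteq I'$, $t \le t'$; up to the symmetry in the $n+3$ points, such pairs exhaust (III$_k$), and the monotonicity then forces $\kappa_{J(L_{I'}, \sigma_{t'})} \le \kappa_{J(L_I, \sigma_t)} \le 0$.

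The main obstacle is twofold. First, one must match the facet inequalities of $\overline{\Eff}(X^n_{n+3})$ from \cite{BraDumPos3} against the groupings (I), (II), (IV), and the divisorial members of (III$_k$), verifying that no effective-cone constraint is left out. Second, the reduction to a boundary pair is not immediate in every regime: in the case $|I'| = 0$ with $n-k$ even, (III$_k$) contains no pair with $|I| = 0$, and one must instead combine a removal step with an increment of $t$ --- controlled by the inequality $\sum_{i \ne j} m_i \le nd$ from (II) --- to pass from the boundary pair $(\{j\}, (n-k)/2) \in$ (III$_k$) to $(\emptyset, (n-k+2)/2)$, and analogous mixed decrements handle the remaining edge cases of the admissible range $0 \le t \le n/2$.
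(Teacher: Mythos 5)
Your argument is correct and follows essentially the same route as the paper: both reduce to Theorem \ref{n+3: SBLD=MCD} to identify the stable base locus components of an effective divisor with the joins $J(L_I,\sigma_t)$ having $\kappa>0$, and both rest on the observation that, under the effectivity inequalities (\RN 1) and (\RN 2), the $\kappa$-inequalities for joins of dimension exactly $n-k$ imply those for all joins of larger dimension. The only difference is one of packaging: the paper runs an induction on $k$ starting from the movable cone ($k=1$, cited from \cite{BraDumPos3}) and merely asserts the redundancy of (\RN 3$_{k-1}$), whereas you argue directly and supply the explicit monotonicity computation (including the parity edge case at $|I|=0$) that justifies that assertion.
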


\begin{proof}
For $k=1$ the statement is proved in \cite[Theorem 5.3]{BraDumPos3}.

We proceed by induction on $k$. We want to prove that $\mathcal D_k$ is defined by the inequalities (\RN 1), (\RN 2), (\RN 3$_k$) and (\RN 4), for $k\ge 2$. We know that $\mathcal D_k \subseteq \mathcal D_{k-1}$. Hence, by the induction hypothesis, we have that $\mathcal D_k$ is contained in the cone cut out by the  inequalities (\RN 1), (\RN 2), (\RN 3$_{k-1}$) and (\RN 4). Notice that the only inequalities to be added to the list are the ones excluding divisors with stable base locus of dimension exactly $n-k$. By Theorem \ref{n+3: SBLD=MCD}, these are given by $\kappa_{J(L_I,\sigma_t)}\le 0$ for all joins $J(L_I,\sigma_t)$ of dimension $n-k$. Recalling that the dimension of any such join is $|I|+2t-1$, we get that these inequalities are precisely the ones of (\RN 3$_k$). We conclude by noticing that, by definition of $\kappa_{J(L_I,\sigma_t)}$ and the effectivity conditions (\RN 1) and (\RN 2), the inequalities (\RN 3$_k$) imply the ones of (\RN 3$_{k-1}$), so they become redundant and can be discarded.
\end{proof}

\begin{remark}\label{eff-cone-n+3}
The effective cone $\mathcal D_0$ of $X^n_{n+3}$ is described in
\cite[Theorem 5.1]{BraDumPos3}. The defining inequalities in this case are only (\RN 1), (\RN 2), (\RN 3$_{k=0}$).
\end{remark}

\begin{corollary}  \label{s<n+3: D_k}
 For $X=X_{s}^n$ with $1\le s\le n+2$,
  the cone  $\mathcal D_k$ is defined by the inequalities from Proposition \ref{n+3: D_k} as follows:
 \begin{enumerate} 
\item if $s=n+2$,
by the inequalities (\RN 1), (\RN 2), (\RN 3)$_k$ (with $t=0$) and $(\RN 4)$;
\item if $s\le n+1$, then 
\begin{enumerate}
\item if $k\ge n-s+1$, by the inequalities (\RN 1), (\RN 3)$_k$ (with $t=0$) and $(\RN 4)$,
\item if $k< n-s+1$, then we have $\mathcal D_k=\mathcal D_{k+1}$.
\end{enumerate}
\end{enumerate}
\end{corollary}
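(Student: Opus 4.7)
Plan: The strategy is to deduce the description of $\mathcal D_k$ for $s\le n+2$ from Proposition \ref{n+3: D_k} by restricting to the case of fewer points and tracking which inequalities survive.

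As a first step, I would extend Theorem \ref{n+3: SBLD=MCD} to $X^n_s$ with $s\le n+2$: the stable base locus decomposition of $\mathrm{Eff}(X^n_s)$ coincides with the Mori chamber decomposition, and is induced by the hyperplane arrangement $\{\kappa_{L_I,\sigma_0}=0: \emptyset\ne I\subseteq\{1,\dots,s\}\}$. The crucial geometric observation is that for $s\le n+2$ general points of $\PP^n$ there is no unique rational normal curve of degree $n$ through them, so the subvarieties $\sigma_t$ and the joins $J(L_I,\sigma_t)$ with $t\ge 1$ appearing in the $s=n+3$ case do not arise as canonically defined stable base locus subvarieties on $X^n_s$. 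For $s=n+2$ this can be read off from Mukai's description of the Mori chamber decomposition; for $s\le n+1$ the variety $X^n_s$ is toric, and the statement follows from the toric picture.

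Granted this structural input, case (1), $s=n+2$, is proved by running the induction on $k$ from the proof of Proposition \ref{n+3: D_k} verbatim, with $t=0$ throughout. At each step the condition that $\mathcal D_k$ excludes divisors whose stable base locus contains a linear join $L_I$ of dimension $n-k$ translates, via formula \eqref{k_J}, into the inequalities (\RN 3)$_k$ with $t=0$, which combined with (\RN 1), (\RN 2) and (\RN 4) describe $\mathcal D_k$.

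For case (2), $s\le n+1$, the points span at most an $(s-1)$-plane and $X^n_s$ is toric. Inequality (\RN 2), which in the $s=n+3$ setting corresponds to stable base loci involving more points than are available, becomes redundant and disappears from the description. For $k\ge n-s+1$ (case 2a) the inequality (\RN 3)$_k$ with $t=0$ involves index sets $I$ with $|I|=n-k+1\le s$, which are nonempty, and together with (\RN 1) and (\RN 4) cut out $\mathcal D_k$. For $k<n-s+1$ (case 2b) the constraint $|I|=n-k+1\le s$ has no solution so (\RN 3)$_k$ is vacuous; passing from $\mathcal D_{k+1}$ to $\mathcal D_k$ therefore adds no new defining inequalities and the two cones coincide. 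Equivalently, the maximum dimension of a proper linear join in $X^n_s$ is bounded, so the filtration $\mathcal D_k\supseteq\mathcal D_{k+1}$ stabilises in this low-$k$ regime.

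The main obstacle is the first step: showing rigorously that the stable base locus subvarieties of effective divisors on $X^n_s$ with $s\le n+2$ are exhausted by the linear joins $L_I$, and that higher-order subvarieties analogous to secant varieties of rational normal curves do not contribute. Once this structural input is secured, the remainder is a direct combinatorial adaptation of the induction already carried out in Proposition \ref{n+3: D_k}.
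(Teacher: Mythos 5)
Your route is genuinely different from the paper's. The paper deduces the corollary in one step by identifying $\N^1(X^n_s)_\R$ with the linear subspace $\{m_{s+1}=\dots=m_{n+3}=0\}$ of $\N^1(X^n_{n+3})_\R$ and restricting the inequalities of Proposition \ref{n+3: D_k} to that slice; since the extra blown-up points are general, they avoid the stable base locus of any divisor pulled back from $X^n_s$, so the cones $\mathcal D_k$ of $X^n_s$ are literally the slices of the corresponding cones of $X^n_{n+3}$, and no fresh classification of stable base locus subvarieties is required. You instead propose to re-prove Theorem \ref{n+3: SBLD=MCD} for $s\le n+2$ and rerun the induction of Proposition \ref{n+3: D_k}. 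That can be made to work, but the step you yourself call ``the main obstacle'' --- that the stable base locus subvarieties of $X^n_s$ for $s\le n+2$ are exhausted by the linear spans $L_I$ --- is precisely what your approach must supply and what the paper's slicing sidesteps; ``there is no unique rational normal curve through $s\le n+2$ points'' is a heuristic, not an argument, and even granting it you would still need to check that the restricted $t\ge 1$ inequalities become redundant on the slice.

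The concrete gap is in case (2b). You argue that since (\RN 3$_k$) is vacuous when $n-k+1>s$, ``passing from $\mathcal D_{k+1}$ to $\mathcal D_k$ adds no new defining inequalities''. But the inclusion runs the other way, $\mathcal D_{k+1}\subseteq\mathcal D_k$: inequalities are added when passing from $\mathcal D_{k-1}$ \emph{down} to $\mathcal D_k$, so the vacuousness of (\RN 3$_k$) yields $\mathcal D_k=\mathcal D_{k-1}$, not $\mathcal D_k=\mathcal D_{k+1}$. To conclude $\mathcal D_k=\mathcal D_{k+1}$ you would need (\RN 3$_{k+1}$) to be vacuous, i.e.\ $n-k>s$, and this fails at $k=n-s$. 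The discrepancy is not cosmetic: for $s\le n-1$ the divisor $H-\sum_{i=1}^{s}E_i$ has stable base locus equal to the span $L_{\{1,\dots,s\}}$, of dimension $s-1$, hence it lies in $\mathcal D_{n-s}$ but not in $\mathcal D_{n-s+1}$; already for $X^3_2$ one has $\mathcal D_1=\Mov(X)\neq\Nef(X)=\mathcal D_2$ although $k=1<n-s+1=2$. So the chain of equalities your argument would produce, $\mathcal D_1=\dots=\mathcal D_{n-s+1}$, is false, while what your vacuousness observation actually proves is $\mathcal D_1=\dots=\mathcal D_{n-s}$. The claim in (2b) needs its index corrected to $\mathcal D_k=\mathcal D_{k-1}$ (equivalently, the range shifted to $k<n-s$) before any proof of it can go through.
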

\begin{proof}
The statement follows from Proposition \ref{n+3: D_k} by using the natural isomorphism between the N\'eron-Severi space of $X=X^n_s$ and the linear subspace of $\N^1(X^n_{n+3})$ defined by the equations $m_i=0$, for $s+1\le i \le n+3$.
\end{proof}

With a description of the cones $\mathcal D_k$ in $X_s^n$ via their inequalities at hand, it is immediate to compute their dual cones with respect to the standard intersection pairing. Recall that $\N_1(X_s^n)$ is generated by the class of a general line in $X_s^n$, which we denote by $h$, along with the classes $e_i$, for every $i=1,\dots,n+3$, of a general line in the exceptional divisor $E_i$. Thus,  the intersection product of a  divisor $D \in \N^1(X_s^n)_\mathbb R$ of the form  \eqref{divisor on X^n_s} and a curve of class
\begin{equation}\label{curve on X^n_s}
c=\delta h - \sum_{i=1}^s\mu_i e_i \in \N_1(X_s^n)_\mathbb R
\end{equation} 
is computed  by
\begin{equation} \label{n+3: int pair} D\cdot c= d\delta  - \sum m_i \mu_i. \end{equation}
 In particular, to every hyperplane in $\N^1(X_s^n)_\mathbb R$, there corresponds a curve class in $\N_1(X_s^n)_\mathbb R$.

We will show that for $X^n_s$, with $s\le n+3$, the dual of $\mathcal D_k$ is precisely $\mathcal C_k$ for every $0\leq k \le n-1$, that is:  strong duality holds for every $k$. 
We need to introduce another ingredient before we can state and prove the result:
the Weyl group action on $\N_1(X_s^n)_\mathbb R$, which is described in detail in \cite[Proposition 2.5]{DM2}. For an index set $\Gamma \subseteq \{1, \ldots, s\}$ of length $n+1$,  the standard Cremona transformation ${\rm Cr}_\Gamma:\PP^n\dasharrow\PP^n$ based at the points of $\PP^n$ parametrized by $\Gamma$, lifts to an isomorphism of  $\N_1(X_s^n)_\mathbb R$ sending the curve class $c\in \N_1(X_s^n)_\mathbb R$ of the form \eqref{curve on X^n_s} to the following curve class:

\begin{equation} \label{Cremona curves}
{\rm Cr}_\Gamma (c) = (\delta - (n-1)a_\Gamma) h - \sum_{j \in \Gamma}(\mu_j - a_\Gamma)e_j - \sum_{i \notin \Gamma}\mu_i e_i,
\end{equation}
where we set
\begin{equation*}  a_\Gamma := \sum_{j \in \Gamma} \mu_j - \delta.
\end{equation*}
A composition of Cremona transformations is called a Weyl transformation and we say that two curve classes $c$ and $c'$ are in the same Weyl orbit if there is a Weyl transformation $\phi$ such that $\phi(c)=c'$. 

We are now in position to prove the following theorem.
\begin{theorem}[Strong duality theorem for $X^n_{n+3}$]\label{strong-duality-thm-n+3}
Let $1\le s \le n+3$ and $X=X^n_s$, then 
 $$\mathcal{C}_k=\mathcal D_k^\vee.$$
\end{theorem}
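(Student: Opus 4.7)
The inclusion $\mathcal{C}_k\subseteq\mathcal{D}_k^\vee$ is Remark \ref{easy-rmk}, so my goal is the reverse inclusion. I will use the explicit facet description of $\mathcal{D}_k$ given in Proposition \ref{n+3: D_k} (for $s=n+3$) and Corollary \ref{s<n+3: D_k} (for $s\le n+2$): via the intersection pairing \eqref{n+3: int pair}, each defining inequality of $\mathcal{D}_k$ corresponds to an effective curve class spanning an extremal ray of $\mathcal{D}_k^\vee$, so it suffices to realise each such class inside $\mathcal{C}_k$. The facets of types (I), (II) and (IV) are inherited from $\mathcal{D}_0$, and a direct inspection shows that their dual curves are either moving on $X$ or move in a family sweeping out an exceptional divisor, so they already lie in $\mathcal{C}_k$ for every $k$.

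The substantive work concerns type (III$_k$): each facet $\kappa_{J(L_I,\sigma_t)}=0$ with $|I|+2t-1=n-k$ dualises, by \eqref{k_J} and \eqref{n+3: int pair}, to the curve class
$$c_{I,t} := (|I|+(n+1)t-1)\,h - (t+1)\sum_{i\in I}e_i - t\sum_{i\notin I}e_i,$$
which I need to place in $\mathcal{C}_k$. For $t=0$ we have $|I|=n-k+1$ and $c_{I,0}=(n-k)h-\sum_{i\in I}e_i$; by Example \ref{lin curve} this is the class of the strict transform of a rational normal curve of degree $n-k$ through the points indexed by $I$, moving in a family that sweeps out the linear span $L_I$ of dimension $n-k$, and hence $c_{I,0}\in\mathcal{C}_k$. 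Since Corollary \ref{s<n+3: D_k} guarantees that only $t=0$ facets occur when $s\le n+2$, this step alone settles those cases.

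For $s=n+3$ and $t\ge 1$, my plan is to reduce to the $t=0$ case via the Weyl action \eqref{Cremona curves}: every standard Cremona lifts to a pseudo-automorphism of $X^n_{n+3}$, acting isomorphically on $\N_1(X)$ and preserving both effectivity and the $k$-moving property, because a family of $k$-moving irreducible curves transports away from the codimension-two indeterminacy to a family of curves of the same numerical class, sweeping a birational cycle of the same dimension. The task is then to show that every class $c_{I,t}$ with $t\ge 1$ is Weyl-equivalent to some $c_{I',0}$ and hence lies in $\mathcal{C}_k$ by the previous paragraph. The principal obstacle is precisely this orbit-reduction step: applying \eqref{Cremona curves} and tracking the pair $(|I|,t)$ through Cremona transformations based on suitable $(n+1)$-subsets of the $n+3$ points, for each admissible value $t\in\{1,\ldots,\lfloor n/2\rfloor\}$. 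This is the curve-theoretic counterpart of the Weyl orbit analysis of the Mukai joins $J(L_I,\sigma_t)$ already exploited in \cite{BraDumPos3} to describe the effective and movable cones, and it should admit a finite, uniform case check that completes the proof.
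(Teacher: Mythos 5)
Your outline coincides with the paper's proof: read off the extremal rays of $\mathcal{D}_k^\vee$ from the inequalities of Proposition \ref{n+3: D_k} via the pairing \eqref{n+3: int pair}, dispose of types (\RN 1), (\RN 2), (\RN 4) directly, handle the $t=0$ classes of type (\RN 3$_k$) by Example \ref{lin curve} (which also settles $s\le n+2$ via Corollary \ref{s<n+3: D_k}), and push the $t\ge 1$ classes into the $t=0$ case by the Weyl action. The problem is that you stop exactly at the step that carries the mathematical content: you assert that every $c_{I,t}$ with $t\ge 1$ is Weyl-equivalent to some $c_{I',0}$, call this "the principal obstacle", and say it "should admit a finite, uniform case check" without exhibiting any Cremona transformation or verifying that the orbit actually reaches a $t=0$ class. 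As written, the reverse inclusion $\mathcal{D}_k^\vee\subseteq\mathcal{C}_k$ is therefore not proved for $s=n+3$ and $t\ge1$.

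The paper closes this gap with a short induction on $t$ and one explicit choice of center. After relabelling so that $I=\{1,\dots,r_t\}$ with $r_t=n-2t-k+1$, take $\Gamma=\{1,\dots,r_t\}\cup\{r_t+3,\dots,n+3\}$, a set of $n+1$ indices. For $c_{I,t}$ one computes $a_\Gamma=\sum_{j\in\Gamma}\mu_j-\delta=r_t(t+1)+(n+1-r_t)t-\bigl(r_t+(n+1)t-1\bigr)=1$, so formula \eqref{Cremona curves} gives
$${\rm Cr}_\Gamma(c_{I,t})=\bigl(r_t+2+(n+1)(t-1)-1\bigr)h-\sum_{i=1}^{r_t+2}t\,e_i-\sum_{i=r_t+3}^{n+3}(t-1)e_i,$$
which is precisely the class of type (\RN 3$_k$) attached to the pair $(t-1,\,I\cup\{r_t+1,r_t+2\})$; the quantity $|I|+2t-1=n-k$ is preserved, so the iteration stays within the list (\RN 3$_k$) and terminates at a class $(n-k)h-\sum_{i\in I'}e_i$ with $|I'|=n-k+1$. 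That class sweeps out $L_{I'}$, and the images of its irreducible representatives under the composed Cremonas are irreducible representatives of $c_{I,t}$ sweeping out $J(L_I,\sigma_t)$, a cycle of dimension $n-k$. This single computation is exactly what your "uniform case check" needs to be; everything else in your proposal matches the paper's argument.
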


\begin{proof} 
Using the intersection pairing \eqref{n+3: int pair}, for every $k$, the inequalities of Proposition \ref{n+3: D_k} correspond to the following effective curve classes which generate the dual cone $\mathcal D_k^\vee$:
\begin{itemize}
\item[(\RN 1)] $h$ and $h-e_i$, for every $i=1, \ldots, n+3$;
\item[(\RN 2)] $nh-\sum_{j=1}^{n+3}e_j + e_i$, for every $i=1, \ldots, n+3$;
\item[(\RN 3$_k$)] $ (|I| + (n+1)t-1)h - \sum_{i\in I} (t+1)e_i - \sum_{i \notin I} t e_i$,  for every $0\leq t \le n/2$ and $|I|=n-2t-k+1$;
\item[(\RN 4)] $e_i$, for every $i=1, \ldots, n+3$.
\end{itemize}
Since the inclusion $\mathcal C_k \subseteq \mathcal D_k^\vee$ is always satisfied, in  order to prove the strong duality statement, we only need to show that each one of the above curve classes belongs to $\mathcal C_k$.

From now on we will assume that $s=n+3$. For smaller $s$ the proof is similar and uses Corollary \ref{s<n+3: D_k}, so we leave the details to the reader.

First of all notice that the curves classes (\RN 1) and (\RN 2) are dual to %(effectivity inequalities) 
codimension one faces of the effective cone, by Remark \ref{eff-cone-n+3}.
Hence, by duality, they are in $\mathcal C_0$  which is contained in $\mathcal C_k$ for every $k$.

The curves in the classes (\RN 4) sweep out the exceptional divisors, so they belong to $\mathcal C_1$, which is contained in $\mathcal C_k$ for every $1\le k \le n-1$.

It remains to deal with the curve classes (\RN 3$_k$). Given a pair $(t, I)$ satisfying the bounds in (\RN 3$_k$), the  curve class 
\begin{equation} \label{c: join} c_{I,t}=  (|I| + (n+1)t-1)h - \sum_{i\in I} (t+1)e_i - \sum_{i \notin I} t e_i \end{equation}
is dual to the wall $\kappa_{J(L_I,\sigma_t)}=0$.  
This wall separates the divisors which do not contain the join $J(L_I,\sigma_t)$ in their stable base locus  from those that do. By Theorem \ref{PayneChoi}, we have that  
there is a family of curves of class $c_{I,t}$ that sweep out a subvariety of some small modification of $X$ that is birational to $J(L_{I},\sigma_t)\subset X$. 
 We claim that such family lives on $X$ and it sweeps out $J(L_{I},\sigma_t)$. 
We now prove the claim. 
Assume that $t=0$. In this case the class of $c_{I,0}$ reads as follows:
\begin{equation} \label{curves sweep linear}
 (n-k)h - \sum_{|I|=n-k+1} e_i.
\end{equation}
This curve class has irreducible representatives which sweep out the linear cycle $L_{I}$, see Example \ref{lin curve}. 

 Now, for any $t> 0$ we show that the curve class \eqref{c: join}  is in the Weyl orbit of a curve class of type \eqref{curves sweep linear}. 
Assume the statement is true for any pair $(t-1, I')$ satisfying the conditions in (\RN 3$_k$), and let $c_{I,t}$ be a curve class for a pair $(t, I)$ as in \eqref{c: join}. After relabelling the indexes if necessary, we may assume that $I = \{1, \ldots , r_t \}$ where $r_t=n-2t-k+1$. Now take $\Gamma = \{1, \ldots, r_t\}\cup \{r_t+3, \ldots, n+3\}$, we have that
$${\rm Cr}_\Gamma(c_{I,t}) = (r_t+2+(n+1)(t-1)-1)h - \sum_{i=1}^{r_t+2} te_i - \sum_{i=r_t+3}^{n+3} (t-1)e_i, $$
which is a curve class as in (\RN 3$_k$) for the pair $(t-1, I'=I\cup\{r_t+1, r_t+2\})$. By the induction hypothesis, there is a Weyl transformation $\phi'$ taking ${\rm Cr}_\Gamma(c_{I,t})$ to a curve with class as in \eqref{curves sweep linear}. Thus, taking $\phi = \phi' \circ {\rm Cr}_\Gamma$, we see that $c_{I,t}$ is in the Weyl orbit of some curve class with irreducible representatives sweeping out a linear cycle of dimension equal to the dimension of $J(L_{I}, \sigma_t)$. This proves that $c_{I,t}$ has irreducible representatives on $X$, which are the images of the irreducible representatives of \eqref{curves sweep linear}, {and they sweep out $J(L_I,\sigma_t)$ on $X$.}
Hence, $c$ belongs to $\mathcal C_k$. 
\end{proof}

\begin{remark}
We recall that an $(i)-$curve class on a smooth $n$-dimensional variety $X$ is defined as a smooth irreducible rational curve with normal bundle isomorphic to $\oplus_{i=1}^{n-1}\mathcal{O}(i)$, for $i\in \{-1,0,1\}$, see \cite[Definition 1.1]{DM2} and \cite{DM3}.
The extremal rays of $\mathcal{C}_0$
are examples of $(1)-$curves
and $(0)-$curves. 
The extremal rays of the Mori cone $\mathcal{C}_{n-1}$ of type $h-e_i-e_j$ and (for $s= n+3$) the rational normal curve class $C$
are all the $(-1)-$curves of $X^n_s$.
\end{remark}

\subsection{Blow up of $\PP^4$ in $8$ points and of $\PP^3$ in $7$ points}\label{section-n+4}
We now prove strong duality for the  two remaining cases:  $X^3_7$ and $X^4_8$.

We first recall the definition of Weyl cycles in $X^n_s$ introduced in \cite{BDP-Ciro}. 
A {\it Weyl divisor} is an effective divisor $D\in\Pic(X^n_s)$ which belongs to the Weyl orbit of an exceptional divisor $E_i$.
A {\it Weyl cycle} of codimension $r$ is
an irreducible component of the intersection of pairwise orthogonal Weyl divisors, where orthogonality is taken with respect to the Dolgachev-Mukai pairing.

\begin{theorem}\label{same decomposition3-4}
For $X=X_{7}^3$ or $X=X_{8}^4$ the Mori chamber decomposition and the stable base locus decomposition  coincide. In particular they are induced by the Weyl cycles of $X$.
\end{theorem}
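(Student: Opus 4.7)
My plan is to mimic the structure of the proof of Theorem \ref{n+3: SBLD=MCD}, replacing the role of the joins $J(L_I,\sigma_t)$ (which formed the complete list of stable base locus subvarieties in the $s\leq n+3$ case) with the list of Weyl cycles of $X$, whose existence and explicit description for $X^3_7$ and $X^4_8$ I would invoke from \cite{BDP-Ciro}. The two decompositions satisfy, \emph{a priori}, the refinement relation MCD $\succeq$ SBL (see Section \ref{section-prelim-MCD}), so it suffices to exhibit an intermediate finite hyperplane arrangement $\mathcal{H}$, cut out by the Weyl cycles, which is simultaneously refined by SBL and refines MCD; then all three decompositions must coincide.

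First, I would set up $\mathcal{H}$. Because $X^3_7$ (respectively $X^4_8$) is a Mori dream space, the orbit under the Weyl group of each exceptional class $E_i$ consists of finitely many \emph{effective} Weyl divisor classes; by taking pairwise orthogonal intersections (in the sense of \cite{BDP-Ciro}) one obtains a finite list of Weyl cycles $Y$. For each such $Y$ I would attach a linear form $\kappa_Y$ on $\N^1(X)_{\R}$ computing the expected multiplicity of containment of $Y$ in $|D|$, exactly as in \eqref{k_J}; linearity follows because each Weyl divisor is rigid (its linear system consists of a single element), and intersecting rigid divisors yields the exact set-theoretic and scheme-theoretic base-locus contribution. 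Thus the arrangement $\mathcal{H}:=\{\kappa_Y=0\}_{Y\text{ Weyl cycle}}$ is refined by the SBL decomposition: crossing a hyperplane $\kappa_Y=0$ either adds or removes $Y$ from the stable base locus.

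Next, I would verify that $\mathcal{H}$ refines the Mori chamber decomposition. For both $X^3_7$ and $X^4_8$ the MCD has been described explicitly in the literature (via Mukai's analysis and the classification of small $\mathbb{Q}$-factorial modifications; see the discussion in Section \ref{section-preliminaries} and the references therein), so one can read off the defining walls. I would match each MCD wall to the vanishing of some $\kappa_Y$, by identifying the locus flipped across that wall with a Weyl cycle. This step would use the dictionary between Weyl transformations and the action of Cremona transformations lifted to $\N_1(X)_{\R}$ already recalled in \eqref{Cremona curves}, together with a finite enumeration that can be mechanised in \emph{Magma} as was done in the toric example of Section \ref{toric -ex-section}.

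The main obstacle is not conceptual but bookkeeping: for $X^3_7$ one must handle the $E_7$-type Weyl orbits and for $X^4_8$ the $E_8$-type orbits, producing lists of Weyl cycles of every codimension, and checking face by face that the resulting hyperplane arrangement is precisely the MCD. This is why I would split the proof into the two cases and, in each, first display the list of Weyl divisors and then deduce the Weyl cycles of higher codimension as irreducible components of their intersections. Once these tables are in place, the chain $\mathrm{MCD}\succeq\mathrm{SBL}\succeq\mathcal{H}\succeq\mathrm{MCD}$ forces equality throughout, giving both the coincidence of MCD with SBL and the description in terms of Weyl cycles claimed in the statement.
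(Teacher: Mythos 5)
Your proposal follows essentially the same route as the paper's proof: both build the hyperplane arrangement from the classified Weyl cycles of $X^3_7$ and $X^4_8$ (via the base locus lemmata of \cite{BDP-Ciro}), invoke the known Mori chamber decomposition (for $X^4_8$ due to Mukai \cite{Mukai05} and Casagrande--Codogni--Fanelli \cite{CCF}), and conclude by the sandwich of refinements $\mathcal{H}\preceq\mathrm{SBL}\preceq\mathrm{MCD}\preceq\mathcal{H}$. The only slight divergence is that you justify the linearity of the forms $\kappa_Y$ by rigidity of the Weyl divisors rather than by citing the base locus lemmata directly, but this is a presentational difference resting on the same inputs.
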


\begin{proof}
The proof is similar to that of Theorem \ref{n+3: SBLD=MCD} and it uses two ingredients: on the one hand the classification of Weyl cycles and their corresponding base locus lemmata done in \cite{BDP-Ciro}; on the other hand the description of the Mori chamber decomposition. For the threefold $X^3_7$ the latter is easy to obtain and left to the reader (there are only floppable curves and contracting divisors). On the other hand, obtaining the Mori chamber decomposition for the fourfold $X^4_8$ is not trivial: it was done by Mukai \cite{Mukai05}  and by Casagrande, Codogni and Fanelli \cite{CCF} by means of the Gale duality between $X^4_8$ and the del Pezzo surface $S:=X^2_8$, a projective plane blown up in eight general points.

In \cite{BDP-Ciro} (see also \cite{DM1} for an alternative approach) the Weyl cycles on $X^3_7$ and $X^4_8$ are classified and explicitly described. 
For $X^3_7$ these are $(-1)-$curves and Weyl divisors, (see \cite[Section 4]{BDP-Ciro}),
for $X_8^4$ we further have five types of Weyl surfaces  (see \cite[Section 5]{BDP-Ciro} for details): they are the strict transforms of the following surfaces of $\PP^4$:
\begin{itemize}
\item $S^1_{i,j,k}$ a plane through three points,
\item $S^{3}_{i, \widehat{j}}=J(C_{\widehat{j}},p_i)$, a pointed cone over the rational normal curve passing through seven points $p_1,\dots, \widehat{p_j},\dots,p_8$, 
\item $S^{6}_{i,j,k}$ a sextic surface with five triple points and simple at three points,
\item $S^{10}_{i,j}$ a degree$-10$ surface with two sextuple points and five triple points,
\item $S^{15}_{i}$ a degree$-15$ surface with seven sextuple points and a triple point.
\end{itemize}

Associating to  
each of the above Weyl cycles a hyperplane in the N\'eron-Severi space, we define a hyperplane arrangement which induces 
a chamber decomposition (that we may call {\it Weyl chamber decomposition}) of the effective cone of divisors of $X$, such that the divisors in any chamber contain the same Weyl cycles in their stable base locus.
%%{\color{green} remove next sentence because redundant (we say it below)} The Weyl chamber decomposition is a coarsening of the stable base locus decomposition {\color{green} until here}}.

More precisely: the hyperplane associated to a Weil curve $C$ is defined by the equation $D\cdot C=0$; the hyperplane associated to a Weyl divisor $E$ is defined by $\langle D, E\rangle=0$, where $\langle\cdot,\cdot\rangle$ is the Dolgachev-Mukai pairing.
The hyperplanes associated to the five surfaces have equations $D\cdot \mu=0$, where $\mu$ is one of the following curve classes:
\begin{itemize}
\item $\mu^1_{i,j,k}=2h-e_i-e_j-e_k$,
\item $\mu^{3}_{i, \widehat{j}}=5h-\sum_{k=1}^8e_k-e_i+e_j$,
\item $\mu^{6}_{i,j,k}=8h-2\sum_{l=1}^8e_l+e_i+e_j+e_k$,
\item $\mu^{10}_{i,j}=11h-2\sum_{k=1}^8e_k-e_i-e_j$,
\item $\mu^{15}_{i}=14h-3\sum_{j=1}^8e_j+e_i$.
\end{itemize}

Now,  the Weyl chamber decomposition is refined by the stable base locus decomposition, since effective divisors may have stable base locus which is not of Weyl type. Moreover, recall that the stable base locus decomposition is, in turn, refined by the Mori chamber decomposition, described in \cite[Theorem 5.13]{CCF}. Therefore, since the Weyl and Mori decompositions agree, 
we conclude.
\end{proof}

\begin{theorem}\label{strong-duality-thm-X^4_8}
Let $X=X^3_7$ or 
$X=X^4_8$, then 
 $$\mathcal{C}_k=\mathcal D_k^\vee.$$
\end{theorem}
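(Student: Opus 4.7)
The plan is to mirror the structure of the proof of Theorem \ref{strong-duality-thm-n+3}, replacing the role of the joins $J(L_I,\sigma_t)$ with the Weyl cycles classified in \cite{BDP-Ciro}. By Theorem \ref{same decomposition3-4}, the Mori chamber decomposition of $\overline{\Eff(X)}$ coincides with the stable base locus decomposition and is induced by the hyperplane arrangement associated to the Weyl cycles. Together with Lemma \ref{filtration+SBLD} and Theorem \ref{MCD-cones-divisors}, this implies that each cone $\mathcal{D}_k$ is cut out by (a) the effectivity inequalities defining $\mathcal{D}_0$, and (b) those hyperplanes $\mu\cdot D\ge 0$ coming from Weyl cycles whose base locus lemma involves a stable base locus component of dimension at least $n-k$.

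Next, I would dualize these defining inequalities via the intersection pairing \eqref{n+3: int pair} to produce an explicit finite generating set for $\mathcal{D}_k^\vee$. For $X^3_7$ the generators will come from the effectivity hyperplanes (classes already in $\mathcal{C}_0$) together with the curve classes associated to $(-1)$-curves and Weyl divisors listed in \cite[Section 4]{BDP-Ciro}. For $X^4_8$ the generators will in addition include the curve classes $\mu^1_{i,j,k}$, $\mu^3_{i,\widehat{j}}$, $\mu^6_{i,j,k}$, $\mu^{10}_{i,j}$ and $\mu^{15}_i$ recorded in the proof of Theorem \ref{same decomposition3-4}, restricted in each case to those whose associated Weyl cycle has the correct dimension to contribute to a facet of $\mathcal{D}_k$.

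The inclusion $\mathcal{C}_k\subseteq \mathcal{D}_k^\vee$ is automatic (Remark \ref{easy-rmk}), so the core of the argument is to show that each generator of $\mathcal{D}_k^\vee$ described above actually lies in $\mathcal{C}_k$, that is, is the class of a genuine $k$-moving curve on $X$ itself, not merely on a small modification as guaranteed a priori by weak duality (Theorem \ref{PayneChoi}). For the effectivity-type generators this is immediate as in the proof of Theorem \ref{strong-duality-thm-n+3}. For the remaining generators, I would use the Weyl group action on $\N_1(X)_\mathbb{R}$ (formula \eqref{Cremona curves}) to reduce each such class to a class of the form $(n-k)h-\sum_{i\in I}e_i$ with $|I|=n-k+1$, i.e. the class of a strict transform of a rational normal curve of degree $n-k$ through $n-k+1$ of the points, which manifestly has irreducible representatives and sweeps out the corresponding linear cycle $L_I$, as in Example \ref{lin curve}. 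Concretely, for $X^4_8$ one works down the list of five Weyl surfaces: the class $\mu^1_{i,j,k}$ is already of that form; for $\mu^3_{i,\widehat{j}}$, $\mu^6_{i,j,k}$, $\mu^{10}_{i,j}$ and $\mu^{15}_i$ one exhibits an explicit Cremona $\mathrm{Cr}_\Gamma$ based at an appropriately chosen $5$-subset $\Gamma\subset\{1,\dots,8\}$ that decreases the degree and falls into a previously handled case, proceeding by induction on the degree exactly as in the joins argument of Theorem \ref{strong-duality-thm-n+3}. An analogous (and simpler) reduction handles the $(-1)$-curves and Weyl divisor classes in $X^3_7$.

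The main obstacle I anticipate is the bookkeeping in the $X^4_8$ case: one must check that for \emph{each} of the five families of Weyl surface curve classes there is a Weyl transformation that sends it into the Weyl orbit of a linear class of matching dimension, and moreover that the image curve has irreducible representatives whose preimages under the Weyl transformation remain irreducible on $X$. Since the Weyl transformations are isomorphisms of $\N_1(X)_\mathbb{R}$ induced by standard Cremona transformations at sets of $n+1$ points in general position, irreducibility is preserved, and the required orbit computations reduce to finite case checks made feasible by the classification in \cite[Section 5]{BDP-Ciro}. Once these are verified, strong duality $\mathcal{C}_k=\mathcal{D}_k^\vee$ follows for every $0\le k\le n-1$.
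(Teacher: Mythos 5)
Your proposal follows essentially the same route as the paper's own (sketched) proof: both reduce to the case of $\mathcal D_2(X^4_8)$ via Theorem \ref{same decomposition3-4}, dualize the wall inequalities given by the Weyl cycles, and use the Weyl group action to transform each of the curve classes $\mu^{3}_{i,\widehat{j}}$, $\mu^{6}_{i,j,k}$, $\mu^{10}_{i,j}$, $\mu^{15}_{i}$ into a class $\mu^1_{i,j,k}=2h-e_i-e_j-e_k$, which is $2$-moving by Example \ref{lin curve}. Your plan is correct and, if anything, spells out slightly more of the bookkeeping (the $X^3_7$ case and the preservation of irreducibility under Cremona) that the paper leaves to the reader.
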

\begin{proof}
Following the proof of Theorem \ref{strong-duality-thm-n+3}, one can prove the duality statement for $\mathcal{C}_1$ and $\mathcal{C}_2$.  Since for the movable cone $\mathcal{D}_1$ the statement is easy, we just give a hint of the proof for the case of $\mathcal D_2(X^4_8)$.

 Using the notation of the proof of Theorem \ref{same decomposition3-4}, one first observes that $\mu^1_{1,2,3}$ belongs to $\mathcal C_2(X^4_8)$ (see also Example \ref{lin curve}) and then verifies that the remaining four curve types $\mu^1_{1,2,3}$, $\mu^{3}_{1, \widehat{8}}$,
$\mu^{6}_{1,2,3}$,
$\mu^{10}_{1,2}$ and
$\mu^{15}_{1}$
may be transformed to $\mu^1_{i,j,k}$ via the Weyl action, for some indices $i,j,k$.
The details are left to the reader.
\end{proof}

\bigskip

\begin{bibdiv}
\begin{biblist}

\bib{Araujo-Casagrande}{article}{
    AUTHOR = {Araujo, C.},
    AUTHOR = {Casagrande, C.},
     TITLE = {On the {F}ano variety of linear spaces contained in two
              odd-dimensional quadrics},
   JOURNAL = {Geom. Topol.},
    VOLUME = {21},
      YEAR = {2017},
    NUMBER = {5},
     PAGES = {3009--3045},
      ISSN = {1465-3060,1364-0380},
       DOI = {10.2140/gt.2017.21.3009},
       URL = {https://doi.org/10.2140/gt.2017.21.3009},
}

\bib{ADHL}{book}{
    AUTHOR = {Arzhantsev, I.},
    AUTHOR ={Derenthal, U.}, 
    AUTHOR = {Hausen, J.},
    AUTHOR = {Laface, A.},
     TITLE = {Cox rings},
    SERIES = {Cambridge Studies in Advanced Mathematics},
    VOLUME = {144},
 PUBLISHER = {Cambridge University Press, Cambridge},
      YEAR = {2015},
     PAGES = {viii+530},
      ISBN = {978-1-107-02462-5},
   MRCLASS = {14Cxx (14Jxx 14Lxx)},
  MRNUMBER = {3307753},
MRREVIEWER = {Alexandr V. Pukhlikov},
}

\bib{BCHM}{article}{
    AUTHOR = {Birkar, C.},
    AUTHOR={ Cascini, P.}, 
    AUTHOR={Hacon, C. D.},
     AUTHOR={McKernan, J.},
     TITLE = {Existence of minimal models for varieties of log general type},
   JOURNAL = {J. Amer. Math. Soc.},
%  FJOURNAL = {Journal of the American Mathematical Society},
    VOLUME = {23},
      YEAR = {2010},
    NUMBER = {2},
     PAGES = {405--468},
      ISSN = {0894-0347},
%   MRCLASS = {14E30 (14E05)},
 % MRNUMBER = {2601039},
%MRREVIEWER = {Mark Gross},
   %    DOI = {10.1090/S0894-0347-09-00649-3},
       URL = {https://doi-org.ezp.biblio.unitn.it/10.1090/S0894-0347-09-00649-3},
}

\bib{Magma}{article}{
   author={Bosma, W.},
   author={Cannon, John},
   author={Playoust, Catherine},
   title={The Magma algebra system. I. The user language},
   note={Computational algebra and number theory (London, 1993)},
   journal={J. Symbolic Comput.},
   volume={24},
   date={1997},
   number={3-4},
   pages={235--265},
   issn={0747-7171},
  % review={\MR{1484478}},
 %  doi={10.1006/jsco.1996.0125},
}

\bib{BDPP}{article}{
AUTHOR = {Boucksom, S.},
AUTHOR = {Demailly, J.},
AUTHOR = {P\u{a}un, M.},
AUTHOR = {Peternell, T.},
     TITLE = {The pseudo-effective cone of a compact {K}\"{a}hler manifold and
              varieties of negative {K}odaira dimension},
   JOURNAL = {J. Algebraic Geom.},
%  FJOURNAL = {Journal of Algebraic Geometry},
    VOLUME = {22},
      YEAR = {2013},
    NUMBER = {2},
     PAGES = {201--248},
      ISSN = {1056-3911},
%   MRCLASS = {14E99 (32J18 32L05 53C26)},
%  MRNUMBER = {3019449},
%MRREVIEWER = {Thomas Eckl},
   %    DOI = {10.1090/S1056-3911-2012-00574-8},
       URL = {https://doi.org/10.1090/S1056-3911-2012-00574-8},
}

\bib{BraDumPos3}{article}{
    AUTHOR = {Brambilla, M. C.},
    AUTHOR = {Dumitrescu, O.},
    AUTHOR = {Postinghel, E.},
     TITLE = {On the effective cone of {$\Bbb P^n$} blown-up at {$n+3$}
              points},
   JOURNAL = {Exp. Math.},
%  FJOURNAL = {Experimental Mathematics},
    VOLUME = {25},
      YEAR = {2016},
    NUMBER = {4},
     PAGES = {452--465},
      ISSN = {1058-6458},
%   MRCLASS = {14N05 (14C20)},
%  MRNUMBER = {3499709},
%MRREVIEWER = {Jorge Caravantes},
     %  DOI = {10.1080/10586458.2015.1099060},
 %      URL = {https://doi-org.ezp.biblio.unitn.it/10.1080/10586458.2015.1099060},
}

\bib{BDP-Ciro}{book}{
AUTHOR = {Brambilla, M. C.},
    AUTHOR = {Dumitrescu, O.},
    AUTHOR = {Postinghel, E.},
     TITLE = {Weyl cycles on the blow-up of $\PP^4$ at eight points},
     BOOKTITLE= {The art of doing algebraic geometry},
     PUBLISHER={Birkh\"auser Cham},
     SERIES = {The art of doing algebraic geometry, Trends in Mathematics},
     PAGES={1-21},
     YEAR = {2023},
}

\bib{Brion}{article}{
    AUTHOR = {Brion, M.},
     TITLE = {Vari\'{e}t\'{e}s sph\'{e}riques et th\'{e}orie de {M}ori},
   JOURNAL = {Duke Math. J.},
  FJOURNAL = {Duke Mathematical Journal},
    VOLUME = {72},
      YEAR = {1993},
    NUMBER = {2},
     PAGES = {369--404},
      ISSN = {0012-7094},
   MRCLASS = {14L30 (14M17)},
  MRNUMBER = {1248677},
MRREVIEWER = {Franz Pauer},
    %   DOI = {10.1215/S0012-7094-93-07213-4},
       URL = {https://doi-org.ezp.biblio.unitn.it/10.1215/S0012-7094-93-07213-4},
}

\bib{BOP-S}{article}{
    AUTHOR = {Broomhead, N.}, 
    AUTHOR=  {Ottem, J. C. }, 
     AUTHOR= {Prendergast-Smith, A.},
     TITLE = {Partially ample line bundles on toric varieties},
   JOURNAL = {Glasg. Math. J.},
  FJOURNAL = {Glasgow Mathematical Journal},
    VOLUME = {58},
      YEAR = {2016},
    NUMBER = {3},
     PAGES = {587--598},
      ISSN = {0017-0895},
   MRCLASS = {14M25 (14C20)},
  MRNUMBER = {3530488},
MRREVIEWER = {Shin-Yao Jow},
       %DOI = {10.1017/S001708951500035X},
       URL = {https://doi.org/10.1017/S001708951500035X},
}

\bib{BCP}{book}{
    AUTHOR = {Bus\'e, L.},
    AUTHOR = {Catanese, F.},
    AUTHOR = {Postinghel, E.},
     TITLE = {Algebraic curves and surfaces},
          SUBTITLE = {A history of shapes},
    SERIES = {SISSA Springer Series},
    VOLUME={4},
 PUBLISHER = {Springer Cham},
      YEAR = {2023},
     PAGES = {XIV+205},
URL={https://link.springer.com/book/10.1007/978-3-031-24151-2},
      % DOI = {10.1007/978-3-031-24151-2},
}

\bib{CCF}{article}{
AUTHOR = {Casagrande, C.},
AUTHOR = {Codogni, G.},
AUTHOR = {Fanelli, A.},
     TITLE = {The blow-up of {$\Bbb {P}^4$} at 8 points and its {F}ano
              model, via vector bundles on a del {P}ezzo surface},
   JOURNAL = {Rev. Mat. Complut.},
  FJOURNAL = {Revista Matem\'{a}tica Complutense},
    VOLUME = {32},
      YEAR = {2019},
    NUMBER = {2},
     PAGES = {475--529},
      ISSN = {1139-1138},
   MRCLASS = {14J60 (14E30 14J35 14J45)},
  MRNUMBER = {3942925},
MRREVIEWER = {Sarbeswar Pal},
      % DOI = {10.1007/s13163-018-0282-5},
       URL = {https://doi.org/10.1007/s13163-018-0282-5},
}

\bib{Castelnuovo91}{article}{
    AUTHOR = {Castelnuovo, G.},
         TITLE = {Ricerche generali sopra i sistemi lineari di curve piane},
  JOURNAL = {Mem. Accad. Sci. Torino},
    VOLUME = {II 42},
      YEAR = {1891},
     PAGES = {3--43},
}

\bib{CT}{article}{
    AUTHOR = {Castravet, A. M.},
    AUTHOR = {Tevelev, J.},
     TITLE = {Hilbert's 14th problem and {C}ox rings},
   JOURNAL = {Compos. Math.},
  FJOURNAL = {Compositio Mathematica},
    VOLUME = {142},
      YEAR = {2006},
    NUMBER = {6},
     PAGES = {1479--1498},
      ISSN = {0010-437X},
   MRCLASS = {14L30 (13A50 14C22 14M20)},
  MRNUMBER = {2278756},
MRREVIEWER = {Matthias Meulien},
  %    DOI = {10.1112/S0010437X06002284},
       URL = {https://doi-org.ezp.biblio.unitn.it/10.1112/S0010437X06002284},
}

\bib{ChoiCONES}{article}{
    AUTHOR = {Choi, S. R.},
     TITLE = {Duality of the cones of divisors and curves},
   JOURNAL = {Math. Res. Lett.},
  %FJOURNAL = {Mathematical Research Letters},
    VOLUME = {19},
      YEAR = {2012},
    NUMBER = {2},
     PAGES = {403--416},
      ISSN = {1073-2780},
  % MRCLASS = {14C20 (14J45)},
 % MRNUMBER = {2955771},
%MRREVIEWER = {Vladimir Lazi\'{c}},
    %   DOI = {10.4310/MRL.2012.v19.n2.a12},
 %      URL = {https://doi.org/10.4310/MRL.2012.v19.n2.a12},
}

\bib{Choi14}{article}{
        AUTHOR = {Choi, S. R.},
     TITLE = {On partially ample adjoint divisors},
   JOURNAL = {J. Pure Appl. Algebra},
%  FJOURNAL = {Journal of Pure and Applied Algebra},
    VOLUME = {218},
      YEAR = {2014},
    NUMBER = {7},
     PAGES = {1171--1178},
      ISSN = {0022-4049},
  % MRCLASS = {14C20 (14E30)},
%  MRNUMBER = {3168488},
%MRREVIEWER = {Alexandr V. Pukhlikov},
   %    DOI = {10.1016/j.jpaa.2013.11.010},
 %      URL = {https://doi.org/10.1016/j.jpaa.2013.11.010},
}

\bib{Choi-Gongyo}{article}{
    AUTHOR = {Choi, S. R.},
    AUTHOR = {Gongyo, Y.},
     TITLE = {On a generalized Batyrev's cone conjecture},
   JOURNAL = {Math. Z.},
  %FJOURNAL = {Mathematische Zeitschrift},
    VOLUME = {300},
      YEAR = {2022},
    NUMBER = {2},
     PAGES = {1319--1334},
      ISSN = {0025-5874,1432-1823},
   %MRCLASS = {14E30 (14C20)},
  %MRNUMBER = {4363779},
%MRREVIEWER = {Sheng\ Meng},
     %  DOI = {10.1007/s00209-021-02813-8},
      % URL = {https://doi.org/10.1007/s00209-021-02813-8},
}

\bib{toric-book}{book}{
    AUTHOR = {Cox, D. A.},
    AUTHOR= {Little, J. B.}, 
    AUTHOR= {Schenck, H. K.},
     TITLE = {Toric varieties},
    SERIES = {Graduate Studies in Mathematics},
    VOLUME = {124},
 PUBLISHER = {American Mathematical Society, Providence, RI},
      YEAR = {2011},
     PAGES = {xxiv+841},
      ISBN = {978-0-8218-4819-7},
   MRCLASS = {14M25 (05A15 05E45 52B12)},
  MRNUMBER = {2810322},
MRREVIEWER = {Ivan Arzhantsev},
   %    DOI = {10.1090/gsm/124},
    %   URL = {https://doi.org/10.1090/gsm/124},
}

\bib{DumPos}{article}{
    AUTHOR = {Dumitrescu, O.},
    AUTHOR = {Postinghel, E.},
    TITLE = {Vanishing theorems for linearly obstructed divisors},
   JOURNAL = {J. Algebra},
  FJOURNAL = {Journal of Algebra},
    VOLUME = {477},
      YEAR = {2017},
     PAGES = {312--359},
      ISSN = {0021-8693},
   MRCLASS = {14C20 (14C17 14J70 14N05)},
  MRNUMBER = {3614155},
MRREVIEWER = {Jorge Caravantes},
   %    DOI = {10.1016/j.jalgebra.2017.01.006},
       URL = {https://doi-org.ezp.biblio.unitn.it/10.1016/j.jalgebra.2017.01.006},
}

\bib{DM1}{book}{
	AUTHOR = {Dumitrescu, O.},
	AUTHOR = {Miranda, R.},
	TITLE = {Cremona Orbits in $\mathbb{P}^4$ and Applications},
      BOOKTITLE= {The art of doing algebraic geometry},
     PUBLISHER={Birkh\"auser Cham},
     SERIES = {The art of doing algebraic geometry, Trends in Mathematics},
	YEAR = {2023},
	PAGES = {161--185},
}

\bib{DM2}{article}{
	AUTHOR = {Dumitrescu, O.},
	AUTHOR = {Miranda, R.},
	TITLE = {On $(i)$ curves in blowups of $\mathbb{P}^r$},
	YEAR = {2021},
        URL = {https://arxiv.org/pdf/2104.14141.pdf},
	JOURNAL = {arXiv:2104.14141}}

\bib{DM3}{article}{
	AUTHOR = {Dumitrescu, O.},
	AUTHOR = {Miranda, R.},
	TITLE = {Coxeter theory for curves on blowups of $\mathbb{P}^r$},
	YEAR = {2022},
	URL = {https://arxiv.org/pdf/2205.13605.pdf},
	JOURNAL = {arXiv:2205.13605}}

\bib{DP-positivityI}{article}{
	AUTHOR = {Dumitrescu, O.},
	AUTHOR = {Postinghel, E.},
	TITLE = {Positivity of divisors on blown-up projective spaces, I},
	JOURNAL = { Ann. Sc. Norm. Super. Pisa Cl. Sci.},
        VOLUME = {24},
        PAGES = {599--618},
	YEAR = {2023},}

\bib{ELMNP}{article}{
    AUTHOR = {Ein, L.},
AUTHOR = {Lazarsfeld, R.},
AUTHOR = {Musta\c{t}\u{a}, M.},
AUTHOR = {Nakamaye, M.},
AUTHOR = {Popa, M.},
     TITLE = {Asymptotic invariants of base loci},
   JOURNAL = {Ann. Inst. Fourier (Grenoble)},
  FJOURNAL = {Universit\'{e} de Grenoble. Annales de l'Institut Fourier},
    VOLUME = {56},
      YEAR = {2006},
    NUMBER = {6},
     PAGES = {1701--1734},
      ISSN = {0373-0956},
   MRCLASS = {14C20 (14B05 14F17)},
  MRNUMBER = {2282673},
MRREVIEWER = {Tomasz Szemberg},
       URL = {http://aif.cedram.org/item?id=AIF_2006__56_6_1701_0},
}

\bib{Fulton}{book}{
    AUTHOR = {Fulton, W.},
     TITLE = {Introduction to toric varieties},
    SERIES = {Annals of Mathematics Studies},
    VOLUME = {131},
      NOTE = {The William H. Roever Lectures in Geometry},
 PUBLISHER = {Princeton University Press, Princeton, NJ},
      YEAR = {1993},
     PAGES = {xii+157},
      ISBN = {0-691-00049-2},
   MRCLASS = {14M25 (14-02 14J30)},
  MRNUMBER = {1234037},
MRREVIEWER = {T. Oda},
    %   DOI = {10.1515/9781400882526},
       URL = {https://doi.org/10.1515/9781400882526},
}

\bib{Hu-Keel}{article}{
    AUTHOR = {Hu, Y.},
   AUTHOR = {Keel, S.},
     TITLE = {Mori dream spaces and {GIT}},
      NOTE = {Dedicated to William Fulton on the occasion of his 60th
              birthday},
   JOURNAL = {Michigan Math. J.},
  FJOURNAL = {Michigan Mathematical Journal},
    VOLUME = {48},
      YEAR = {2000},
     PAGES = {331--348},
      ISSN = {0026-2285},
   MRCLASS = {14L24 (14E30)},
  MRNUMBER = {1786494},
MRREVIEWER = {P. E. Newstead},
  %     DOI = {10.1307/mmj/1030132722},
       URL = {https://doi.org/10.1307/mmj/1030132722},
}

\bib{Huizenga}{incollection}{
    AUTHOR = {Huizenga, J.},
     TITLE = {Birational geometry of moduli spaces of sheaves and
              {B}ridgeland stability},
 BOOKTITLE = {Surveys on recent developments in algebraic geometry},
    SERIES = {Proc. Sympos. Pure Math.},
    VOLUME = {95},
     PAGES = {101--148},
 PUBLISHER = {Amer. Math. Soc., Providence, RI},
      YEAR = {2017},
   MRCLASS = {14J60 (14C05 14E30 14J29)},
  MRNUMBER = {3727498},
MRREVIEWER = {Carla Novelli},
    %   DOI = {10.1090/pspum/095/01639},
       URL = {https://doi.org/10.1090/pspum/095/01639},
}
\bib{LMR}{article}{
AUTHOR = {Laface, A.},
AUTHOR = {Massarenti, A.},
AUTHOR = {Rischter, R.},
     TITLE = {On {M}ori chamber and stable base locus decompositions},
   JOURNAL = {Trans. Amer. Math. Soc.},
  FJOURNAL = {Transactions of the American Mathematical Society},
    VOLUME = {373},
      YEAR = {2020},
    NUMBER = {3},
     PAGES = {1667--1700},
      ISSN = {0002-9947},
   MRCLASS = {14E05 (14C20 14J45 14L10 14M15)},
  MRNUMBER = {4068278},
MRREVIEWER = {Sung Rak Choi},
   %    DOI = {10.1090/tran/7985},
       URL = {https://doi.org/10.1090/tran/7985},
}

\bib{LPS}{article}{
    AUTHOR = {Laface, A.},
	AUTHOR={Postinghel, E.},
	AUTHOR={Santana S\'{a}nchez, L. J.},
     TITLE = {On linear systems with multiple points on a rational normal
              curve},
   JOURNAL = {Linear Algebra Appl.},
  FJOURNAL = {Linear Algebra and its Applications},
    VOLUME = {657},
      YEAR = {2023},
     PAGES = {197--240},
      ISSN = {0024-3795},
   MRCLASS = {14C20 (14J17 14J70)},
  MRNUMBER = {4507643},
 %      DOI = {10.1016/j.laa.2022.10.023},
       URL = {https://doi.org/10.1016/j.laa.2022.10.023},
}

\bib{Laz1}{book}{
    AUTHOR = {Lazarsfeld, R.},
     TITLE = {Positivity in algebraic geometry. {I}},
    SERIES = {Ergebnisse der Mathematik und ihrer Grenzgebiete. 3. Folge. A
              Series of Modern Surveys in Mathematics [Results in
              Mathematics and Related Areas. 3rd Series. A Series of Modern
              Surveys in Mathematics]},
    VOLUME = {48},
      NOTE = {Classical setting: line bundles and linear series},
 PUBLISHER = {Springer-Verlag, Berlin},
      YEAR = {2004},
     PAGES = {xviii+387},
      ISBN = {3-540-22533-1},
%   MRCLASS = {14-02 (14C20)},
%  MRNUMBER = {2095471},
%MRREVIEWER = {Mihnea Popa},
   %    DOI = {10.1007/978-3-642-18808-4},
       URL = {https://doi.org/10.1007/978-3-642-18808-4},
}

\bib{Laz2}{book}{
    AUTHOR = {Lazarsfeld, R.},
     TITLE = {Positivity in algebraic geometry. {II}},
    SERIES = {Ergebnisse der Mathematik und ihrer Grenzgebiete. 3. Folge. A
              Series of Modern Surveys in Mathematics [Results in
              Mathematics and Related Areas. 3rd Series. A Series of Modern
              Surveys in Mathematics]},
    VOLUME = {49},
      NOTE = {Positivity for vector bundles, and multiplier ideals},
 PUBLISHER = {Springer-Verlag, Berlin},
      YEAR = {2004},
     PAGES = {xviii+385},
 %     ISBN = {3-540-22534-X},
 %  MRCLASS = {14-02 (14C20 14F05 14F17)},
%  MRNUMBER = {2095472},
%MRREVIEWER = {Mihnea\ Popa},
%       DOI = {10.1007/978-3-642-18808-4},
     URL = {https://doi.org/10.1007/978-3-642-18808-4},
}

\bib{LM}{article}{
    AUTHOR = {Losev A. and Manin, Y.},
     TITLE = {New moduli spaces of pointed curves and pencils of flat
              connections},
      NOTE = {Dedicated to William Fulton on the occasion of his 60th
              birthday},
   JOURNAL = {Michigan Math. J.},
  FJOURNAL = {Michigan Mathematical Journal},
    VOLUME = {48},
      YEAR = {2000},
     PAGES = {443--472},
      ISSN = {0026-2285},
   MRCLASS = {14N35 (14H10 53D45)},
  MRNUMBER = {1786500},
MRREVIEWER = {Andrew Kresch},
    %   DOI = {10.1307/mmj/1030132728},
       URL = {https://doi.org/10.1307/mmj/1030132728},
}

\bib{Mukai04}{incollection}{
    AUTHOR = {Mukai, S.},
     TITLE = {Geometric realization of {$T$}-shaped root systems and
              counterexamples to {H}ilbert's fourteenth problem},
 BOOKTITLE = {Algebraic transformation groups and algebraic varieties},
    SERIES = {Encyclopaedia Math. Sci.},
    VOLUME = {132},
     PAGES = {123--129},
 PUBLISHER = {Springer, Berlin},
      YEAR = {2004},
   MRCLASS = {13A50 (14C22 14M20 17B20 17B67)},
  MRNUMBER = {2090672},
MRREVIEWER = {Dmitry A. Timash\"{e}v},
    %   DOI = {10.1007/978-3-662-05652-3\_7},
       URL = {https://doi.org/10.1007/978-3-662-05652-3_7},
}

\bib{Mukai05}{article}{
    AUTHOR = {Mukai, S.},
     TITLE = {Finite generation of the Nagata invariant rings in A-D-E cases},
   JOURNAL = {RIMS Preprint},
  FJOURNAL = {},
    VOLUME = {},
      YEAR = {2005},
}

\bib{nagata}{article}{
    AUTHOR = {Nagata, M.},
     TITLE = {On the {$14$}th problem of {H}ilbert},
   JOURNAL = {S\={u}gaku},
  FJOURNAL = {Mathematical Society of Japan. S\={u}gaku (Mathematics)},
    VOLUME = {12},
      YEAR = {1960/61},
     PAGES = {203--209},
      ISSN = {0039-470X},
   MRCLASS = {14.08 (15.85)},
  MRNUMBER = {154867},
MRREVIEWER = {Y. Nakai},
}

\bib{okawa}{article}{ 
    AUTHOR = {Okawa, S.},
     TITLE = {On images of {M}ori dream spaces},
   JOURNAL = {Math. Ann.},
  FJOURNAL = {Mathematische Annalen},
    VOLUME = {364},
      YEAR = {2016},
    NUMBER = {3-4},
     PAGES = {1315--1342},
      ISSN = {0025-5831},
   MRCLASS = {14L24 (13A02 13A50)},
  MRNUMBER = {3466868},
MRREVIEWER = {Gergely B\'{e}rczi},
   %    DOI = {10.1007/s00208-015-1245-5},
       URL = {https://doi-org.ezp.biblio.unitn.it/10.1007/s00208-015-1245-5},

}

\bib{Payne}{article}{
    AUTHOR = {Payne, S.},
     TITLE = {Stable base loci, movable curves, and small modifications, for
              toric varieties},
   JOURNAL = {Math. Z.},
  FJOURNAL = {Mathematische Zeitschrift},
    VOLUME = {253},
      YEAR = {2006},
    NUMBER = {2},
     PAGES = {421--431},
      ISSN = {0025-5874},
   MRCLASS = {14M25 (14C20)},
  MRNUMBER = {2218709},
MRREVIEWER = {Antonio Laface},
  %    DOI = {10.1007/s00209-005-0923-5},
       URL = {https://doi.org/10.1007/s00209-005-0923-5},
}

\bib{Totaro}{article}{
    AUTHOR = {Totaro, B.},
     TITLE = {Line bundles with partially vanishing cohomology},
   JOURNAL = {J. Eur. Math. Soc. (JEMS)},
  FJOURNAL = {Journal of the European Mathematical Society (JEMS)},
    VOLUME = {15},
      YEAR = {2013},
    NUMBER = {3},
     PAGES = {731--754},
      ISSN = {1435-9855},
   MRCLASS = {14C20 (32L10)},
  MRNUMBER = {3085089},
MRREVIEWER = {V. A. Golubeva},
   %    DOI = {10.4171/JEMS/374},
       URL = {https://doi.org/10.4171/JEMS/374},
}

\end{biblist}
\end{bibdiv}

\end{document}